\newtheorem{theorem}{Theorem}[section]
\newtheorem{lemma}{Lemma}[section]
\newtheorem{claim}{Claim}
\newenvironment{proof}
      {\medskip\noindent{\bf Proof:}\hspace{1mm}}
      {\hfill$\Box$\medskip}
\def\qed{\ifvmode\mbox{ }\else\unskip\fi\hskip 1em plus 10fill$\Box$}
\def\Ddots{\mathinner{\mkern1mu\raise\p@
\vbox{\kern7\p@\hbox{.}}\mkern2mu
\raise4\p@\hbox{.}\mkern2mu\raise7\p@\hbox{.}\mkern1mu}}
\title{\vspace{-0.7cm}On two problems in graph Ramsey theory}
\author{David Conlon\thanks{St John's College, Cambridge, United Kingdom.
E-mail: {\tt
D.Conlon@dpmms.cam.ac.uk}. Research supported by a Junior Research Fellowship
at St John's College.} \and Jacob Fox\thanks{Department of
Mathematics, Princeton, Princeton, NJ. Email: {\tt
jacobfox@math.princeton.edu}. Research supported by an NSF Graduate
Research Fellowship and a Princeton Centennial Fellowship.} \and
Benny Sudakov\thanks{Department of Mathematics,
UCLA,  Los Angeles, CA 90095. Email: {\tt bsudakov@math.ucla.edu}. Research
supported in part by NSF CAREER award DMS-0812005 and by a
USA-Israeli BSF grant.}}
\date{}
\begin{document}
\maketitle

\begin{abstract}
We study two classical problems in graph Ramsey theory, that of determining the
Ramsey number of bounded-degree graphs and that of estimating the induced
Ramsey number for a graph with a given number of vertices.

The Ramsey number $r(H)$ of a graph $H$ is the least positive integer $N$ such
that every two-coloring of the edges of the complete graph $K_N$ contains a
monochromatic copy of $H$. A famous result of Chv\'atal, R\"{o}dl, Szemer\'edi
and Trotter states that there exists a constant $c(\Delta)$ such that $r(H)
\leq c(\Delta) n$ for every graph $H$ with $n$ vertices and maximum degree
$\Delta$. The important open question is to determine the constant $c(\Delta)$.
The best results, both due to Graham, R\"{o}dl and Ruci\'nski, state that there
are constants $c$ and $c'$ such that $2^{c' \Delta} \leq c(\Delta) \leq 2^{c
\Delta \log^2 \Delta}$. We improve this upper bound, showing that there is a
constant $c$ for which $c(\Delta) \leq 2^{c \Delta \log \Delta}$.

The induced Ramsey number $r_{ind}(H)$ of a graph $H$ is the least positive
integer $N$ for which there exists a graph $G$ on $N$ vertices such that every
two-coloring of the edges of $G$ contains an induced monochromatic copy of $H$.
Erd\H{o}s conjectured the existence of a constant $c$ such that, for any graph
$H$ on $n$ vertices, $r_{ind}(H) \leq 2^{c n}$. We move a step closer to
proving this conjecture, showing that $r_{ind} (H) \leq 2^{c n \log n}$. This
improves upon an earlier result of Kohayakawa, Pr\"{o}mel and R\"{o}dl by a
factor of $\log n$ in the exponent.
\end{abstract}

\section{Introduction}

Given a graph $H$, the {\it Ramsey number} $r(H)$ is defined to be the smallest
natural number $N$ such that, in any two-coloring of the edges of $K_N$, there
exists a monochromatic copy of $H$. That these numbers exist was first proven
by Ramsey \cite{R30} and rediscovered independently by Erd\H{o}s and Szekeres
\cite{ErSz}. Since their time, and particularly since the 1970s, Ramsey theory
has grown into one of the most active areas of research within combinatorics,
overlapping variously with graph theory, number theory, geometry and logic.

The most famous question in the field is that of estimating the Ramsey number
$r(t)$ of the complete graph $K_t$ on $t$ vertices. However, despite some small
improvements \cite{S75, Co}, the standard estimates, that $2^{t/2} \leq r(t)
\leq 2^{2t}$, have remained largely unchanged for over sixty years.
Unsurprisingly
then, the field has stretched in different directions. One such direction that
has become fundamental in its own right is that of looking at what happens to
the Ramsey number when we are dealing with various types of sparse graphs.
Another is that of determining induced Ramsey numbers, i.e., proving, for
any given $H$, that there is a graph $G$ such that any two-coloring of the
edges of $G$ contains an induced monochromatic copy of $H$. In this paper, we
present a unified approach which allows us to make improvements to two
classical questions in these areas.

In 1975, Burr and Erd\H{o}s \cite{BE75} posed the problem of showing that every
graph $H$ with $n$ vertices and maximum degree $\Delta$ satisfied $r(H) \leq
c(\Delta) n$,
where the constant $c(\Delta)$ depends only on $\Delta$. That this is indeed
the case was shown by Chv\'atal, R\"{o}dl, Szemer\'edi and Trotter
\cite{CRST83} in one of the earliest applications of Szemer\'edi's celebrated
regularity lemma \cite{Sz76}. Remarkably, this means that for graphs of fixed
maximum degree the Ramsey number only has a linear dependence on the number of
vertices. Unfortunately, because it uses the regularity lemma, the bounds that
the original method gives on $c(\Delta)$ are (and are necessarily \cite{G97})
of tower type in $\Delta$. More precisely, $c(\Delta)$ works out as being an
exponential tower of 2s with a height that is itself exponential in $\Delta$.

The situation was remedied somewhat by Eaton \cite{E98}, who proved, using a
variant of the regularity lemma, that the function $c(\Delta)$ can be taken to
be of the form $2^{2^{c \Delta}}$. Soon after, Graham, R\"{o}dl and Ruci\'nski
proved \cite{GRR00}, by a beautiful method which avoids any use of the
regularity lemma, that there exists a constant $c$ for which
\[c(\Delta) \leq 2^{c \Delta \log^2 \Delta}.\]
For bipartite graphs, they were able to do even better \cite{GRR01}, showing
that if $H$ is a bipartite graph with $n$ vertices and maximum degree $\Delta$
then $r(H) \leq 2^{c \Delta \log \Delta} n$. They also proved that there are
bipartite graphs with $n$ vertices and maximum degree $\Delta$ for which the
Ramsey number is at least $2^{c' \Delta} n$. Recently, Conlon \cite{C07} and,
independently, Fox and Sudakov \cite{FS07} have shown how to remove the $\log
\Delta$ factor in the exponent, achieving an essentially best possible bound of
$r(H) \leq 2^{c \Delta} n$ in the bipartite case. These results were jointly
extended to hypergraphs in \cite{CFS09}, after several proofs
\cite{CFKO07,CFKO072,NORS07} using the hypergraph regularity lemma.

Unfortunately, if one tries to use these recent techniques to treat general
graphs, the best one seems to be able to achieve is $c(\Delta) \leq 2^{c
\Delta^2}$. In this paper we take a different approach, more closely related to
that of Graham, R\"{o}dl and Ruci\'nski \cite{GRR00}. Improving on their bound,
we show that $c(\Delta) \leq 2^{c \Delta \log \Delta}$, which brings us a step
closer to matching the lower bound of $2^{c' \Delta}$.

\begin{theorem} \label{MaxDegreeIntro}
There exists a constant $c$ such that, for every graph $H$ with $n$ vertices
and maximum degree $\Delta$,
\[r(H) \leq 2^{c \Delta \log \Delta} n.\]
\end{theorem}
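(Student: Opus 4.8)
The plan is to reduce Theorem~\ref{MaxDegreeIntro} to a one-sided embedding problem, in the spirit of Graham, R\"odl and Ruci\'nski: it suffices to show that for $N=2^{c\Delta\log\Delta}n$ any two-coloring of $K_N$ contains a monochromatic subgraph $G$ that is ``robustly dense'', together with the fact that every such $G$ already contains every graph $H$ on $n$ vertices with maximum degree $\Delta$. I will formalize robust density as a bi-density condition: say a graph $G$ on $N'$ vertices is $(\rho,d)$-\emph{dense} if $e_G(A,B)\ge d|A||B|$ for all $A,B\subseteq V(G)$ with $|A|,|B|\ge\rho N'$. Since $H$ need not be bipartite, one cannot simply work with two sides as in the recent bipartite-case arguments; instead fix a proper coloring of $H$ with $\Delta+1$ colors, giving independent sets $I_1,\dots,I_{\Delta+1}$, split $V(G)$ into $\Delta+1$ roughly equal blobs $V_1,\dots,V_{\Delta+1}$, and aim to embed $I_j$ into $V_j$.

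For the embedding itself I would run the standard candidate-set argument. Process the vertices of $H$ in any order; each vertex has at most $\Delta$ neighbors among those processed earlier. For each not-yet-embedded $v\in I_j$ keep a candidate set $C(v)\subseteq V_j$, initially all of $V_j$, and each time a neighbor $u$ of $v$ is embedded replace $C(v)$ by $C(v)\cap N_G(\phi(u))$. When we embed $v$ we must send it to a vertex of $C(v)$ that is unused and, for each of the at most $\Delta$ not-yet-embedded neighbors $v'$ of $v$, ``typical'' with respect to $C(v')$; the $(\rho,d)$-density condition guarantees that for each such $v'$ fewer than $\rho N'$ vertices fail to be typical, so a valid image exists as long as $|C(v)|>\Delta\rho N'+n$. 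A short induction shows that $|C(v)|$ loses only a factor $d$ per embedded neighbor and hence never drops below $d^{\Delta}N'/(\Delta+1)$, so the embedding goes through provided, roughly, that $\rho$ is at most of order $d^{\Delta}/\Delta^{2}$ and $N'$ is at least of order $\Delta n/d^{\Delta}$. With $d$ only polynomially small in $\Delta$ this forces $N'=2^{O(\Delta\log\Delta)}n$.

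The crux — and the step I expect to be the main obstacle — is the Ramsey reduction: producing, in every two-coloring of $K_N$, a monochromatic $(\rho,d)$-dense subgraph on enough vertices with $d$ only polynomially small in $\Delta$ and with $\rho$ in the range the embedding needs. One easy case is a large monochromatic clique, which contains $H$ immediately; the work is in the remaining case. Here I would argue iteratively on the red graph: if it fails to be robustly dense on the current vertex set, this is witnessed by a subset on which red is sparse and therefore on which blue is dense, and one passes to that subset, alternating the roles of the colors. The delicate point is to control how $\rho$, $d$ and the size of the surviving host degrade along this recursion; a naive implementation loses a factor $2^{\Theta(\Delta^{2})}$ — matching what a direct extension of the bipartite-case method gives — and to reach $2^{O(\Delta\log\Delta)}$ one must keep the loss down to $\Delta^{O(\Delta)}$. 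I would look to dependent random choice applied to the denser of the two colors as the tool for extracting the robustly dense piece efficiently at each stage, and then optimize the trade-off between the $d^{-\Delta}$ cost of the embedding and the cost of locating a color that is $(\rho,d)$-dense; balancing these should yield $N=2^{c\Delta\log\Delta}n$, which in particular strictly improves on the Graham, R\"odl and Ruci\'nski bound $2^{c\Delta\log^{2}\Delta}$.
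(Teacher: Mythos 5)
Your embedding half is fine (it is essentially the Graham--R\"odl--Ruci\'nski embedding lemma: a single monochromatic bi-dense host with density threshold $d$ and fraction parameter $\rho\approx d^{\Delta}$ does contain every $n$-vertex $H$ of maximum degree $\Delta$), but the part you yourself flag as the crux --- the Ramsey reduction --- is exactly where the proposal has no proof, and the two ideas you gesture at do not close it. The alternating iteration ``if red fails to be robustly dense, pass to the witness set where blue is denser, switch colors'' is precisely the original GRR scheme: to upgrade a single density defect into a host in which one color is uniformly (or bi-) dense at the scale $\rho\approx d^{\Delta}$ your embedding needs, one must iterate on the order of $\log(1/d)$ times, each iteration shrinking the host by a factor of order $d^{\Delta}$, and that accounting is exactly what produces the exponent $\Delta\log^{2}\Delta$ (or worse); nothing in your outline changes it. Dependent random choice is not a fix either: it is the engine behind the bipartite bounds of Conlon and Fox--Sudakov, and, as noted in the introduction, for general non-bipartite $H$ that route only yields $2^{c\Delta^{2}}n$.

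The paper's proof avoids your target structure altogether. Instead of insisting on one monochromatic bi-$(\rho^{\Delta},\rho)$-dense set, it looks for a sequence $S_{1},\dots,S_{t}$ of large disjoint sets, each bi-$(\rho^{2d_{i}},\rho)$-dense in one fixed color with partial degree budgets $d_{1}+\cdots+d_{t}=\Delta-t+1$, and with each vertex of $S_i$ having almost all of $S_j$ as neighbors in that color for $i<j$; a lemma of Lov\'asz partitions $V(H)$ into pieces of maximum degree $d_{i}$, so the embedding lemma works with this weaker structure. The gain comes from the recursion of Lemma \ref{firstlemma}: when red bi-density fails with witness sets $V_{0},V_{1}$ that are red-sparse to each other, one applies the inductive hypothesis \emph{twice}, inside $V_{0}$ and inside a cleaned copy of $V_{1}$, and concatenates the two blue structures, which \emph{doubles} the blue degree budget ($\Delta_{2}\mapsto 2\Delta_{2}+1$) at the cost of a single factor $\rho^{O(\Delta)}$ in the set sizes. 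Only about $\log\Delta$ doublings are needed (Lemma \ref{secondlemma}), and the accumulated loss telescopes to $\rho^{O(\Delta)}=\Delta^{-O(\Delta)}$ with $\rho\approx\Delta^{-2}$, giving $2^{c\Delta\log\Delta}n$. This doubling mechanism, together with the relaxed multi-set notion of density and the Lov\'asz decomposition, is the missing idea in your plan; without it the trade-off you propose to ``optimize'' stalls at the old bounds.
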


A graph $H$ is said to be an {\it induced subgraph} of $H$ if $V(H) \subset
V(G)$ and two vertices of $H$ are adjacent if and only if they are adjacent in
$G$. The {\it induced Ramsey number} $r_{ind} (H)$ is the smallest natural
number $N$ for which there is a graph $G$ on $N$ vertices such that in every
two-coloring of the edges of $G$ there is an induced monochromatic copy of $H$.
The existence of these numbers was independently proven by Deuber \cite{De},
Erd\H{o}s, Hajnal and P\'osa \cite{ErHaPo} and R\"{o}dl \cite{Ro1}. The bounds
that these original proofs give on $r_{ind} (H)$ are enormous, but it was
conjectured by Erd\H{o}s \cite{Er2} that the actual values should be more in
line with ordinary Ramsey numbers. More specifically, he conjectured the
existence of a constant $c$ such that every graph $H$ with $n$ vertices
satisfies $r_{ind} (H) \leq 2^{c n}$. If true, the complete graph shows that it
would be best possible.

In a problem paper, Erd\H{o}s \cite{Er1} stated that he and Hajnal had proved a
bound of the form $r_{ind} (H) \leq 2^{2^{n^{1 + o(1)}}}$. This remained the
state of the art for some years until Kohayakawa, Pr\"{o}mel and R\"{o}dl
\cite{KoPrRo} proved that there was a constant $c$ such that every graph $H$ on
$n$ vertices satisfies $r_{ind} (H) \leq 2^{c n \log^2 n}$. As in the
bounded-degree problem, we remove one of the logarithms in the exponent.

\begin{theorem} \label{InducedIntro}
There exists a constant $c$ such that every graph $H$ with $n$ vertices
satisfies
\[r_{ind} (H) \leq 2^{c n \log n}.\]
\end{theorem}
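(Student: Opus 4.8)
The plan is to reduce the theorem, via the definition of $r_{ind}$, to producing for each graph $H$ on $n$ vertices a single host graph $G$ on $N = 2^{cn\log n}$ vertices such that every two-colouring of $E(G)$ contains an induced monochromatic copy of $H$. For the host I would take $G$ to be a pseudorandom graph of density $\tfrac12$ on $N$ vertices — concretely a typical sample of $G_{N,1/2}$, which with enormous room to spare is $(\tfrac12, O(\sqrt N))$-jumbled. The reason for density $\tfrac12$ is that the non-edges of $G$ are then as plentiful and as pseudorandom as the edges: since $N = 2^{cn\log n}$ dwarfs $2^{n}\sqrt N$, any set of at most $n$ vertices of $G$ has a common non-neighbourhood of essentially the expected size $2^{-n}N$ inside any vertex set that is not exponentially small. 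This is exactly what lets the \emph{non-edges} of $H$ — the feature distinguishing the induced problem from the ordinary Ramsey problem — be realised essentially for free during an embedding, so that the problem is reduced to a colouring-embedding statement: in any two-colouring of $E(G)$, the majority colour contains a copy of $H$ all of whose non-edges are non-edges of $G$.

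To prove that colouring-embedding statement, fix a two-colouring and let red be a colour with at least $\tfrac14\binom N2$ edges. Fix once and for all an ordering $v_1,\dots,v_n$ of $V(H)$ and attempt to embed $H$ greedily in this order, maintaining for each not-yet-embedded vertex $v_k$ a candidate set $C_k$ consisting of the vertices $w$ of $G$ that are red-adjacent to $\phi(v_j)$ for every already-embedded neighbour $v_j$ of $v_k$ and non-adjacent in $G$ to $\phi(v_j)$ for every already-embedded non-neighbour; at step $i$ one selects $\phi(v_i)\in C_i$ avoiding the at most $n$ vertices already used and then updates the $C_k$. By the jumbledness of $\overline{G}$ each non-edge constraint shrinks a candidate set only by a factor $\approx\tfrac12$, so these are cheap, and if one could simply read off that the red graph inside each $C_k$ had density $\approx\tfrac14$, then each edge constraint would cost a bounded factor and, since a vertex has at most $n$ back-constraints of each kind, the sets $C_i$ would stay a fixed power of $N$ and the embedding would run to completion — which would in fact only require $N = 2^{cn}$.

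The difficulty — and the reason the method delivers $2^{cn\log n}$ rather than the conjectured $2^{cn}$ — is that the red graph inside a candidate set is under adversarial control: although $G[C_k]$ has density $\approx\tfrac12$, the colouring may have made all of $G[C_k]$ blue, so one cannot commit in advance to building a red copy. The remedy I would use is to first locate, for one of the two colours (say red), a vertex set $U$ that is red-dense \emph{everywhere and in a self-improving way}: large, and such that every set of at most $n$ vertices of $U$ has a red common neighbourhood of size a fixed power of $N$ \emph{inside $U$ itself}, so that the property survives passing to the candidate subsets that arise during the embedding. One then carries out the greedy embedding above inside $U$, using the self-improving red-density for the edge constraints and the jumbledness of $\overline{G}$ for the non-edge constraints. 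The step I expect to be the main obstacle is precisely the construction of such a self-improving $U$: a single round of dependent random choice readily produces a set whose small subsets have large red common neighbourhoods in $V(G)$, but forcing those common neighbourhoods back \emph{into $U$} seems to require a bootstrap — a nested sequence $U_0 \supseteq U_1 \supseteq \cdots \supseteq U_{\Theta(\log n)} = U$, each round upgrading the structure at a cost of a factor $2^{-\Theta(n)}$ in the size because each vertex of $H$ may have up to $n$ back-neighbours — and iterating this $\Theta(\log n)$ times is exactly what pushes $N$ up to $2^{\Theta(n\log n)}$. Equivalently one runs a Graham--R\"odl--Ruci\'nski-style fault-tolerant random greedy embedding whose error terms must be controlled across $\Theta(\log n)$ scales; for bipartite $H$ (or for bounded degree) one side can be embedded into a dependent-random-choice set and the other greedily in two rounds, avoiding this loss, but for general $H$ the within-part edges force the multi-round argument.

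I would finally note that this is the same embedding engine behind Theorem~\ref{MaxDegreeIntro}: there the degeneracy of $H$ (at most $\Delta$) plays the role that $n$ plays here, and $K_N$ replaces the pseudorandom host because the non-induced problem has no non-edge constraints, so the only genuinely new ingredients needed for Theorem~\ref{InducedIntro} are the density-$\tfrac12$ pseudorandom host and the accompanying bookkeeping for non-edges.
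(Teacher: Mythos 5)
Your setup is the right one and matches the paper's: take a $(\tfrac12,\lambda)$-pseudo-random host (a random or Paley graph) with $N=2^{cn\log n}$, handle the non-edges of $H$ for free via pseudo-randomness, and reduce everything to finding, in one of the two colours, a structure strong enough to drive a greedy embedding (this is exactly Theorem \ref{maininduced} and Lemma \ref{inducedembeddinglemma}). But the heart of the argument is missing, and the two concrete commitments you do make are not correct. First, the reduction to ``the majority colour contains an induced copy'' fails: colour red all edges of $G$ between two equal halves of $V(G)$ and blue all edges inside the halves; red then has about $\tfrac14\binom{N}{2}$ edges (half of $E(G)$) but is bipartite and contains no triangle, let alone every $H$. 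The colour in which $H$ is embedded cannot be chosen by an edge count; it has to emerge from the structure-finding argument itself. Second, the set $U$ you ask for --- every set of at most $n$ vertices of $U$ has a red common neighbourhood of size a fixed power of $N$ \emph{inside} $U$ --- cannot be guaranteed against an adversarial colouring: the adversary may colour every edge at some vertex of $U$ blue, or make red globally sparse on any candidate $U$, and then no amount of dependent-random-choice bootstrapping within red will recover; any correct construction must be able to switch colours when red fails, and must also tolerate a much weaker property than ``every small set has a large common neighbourhood''.

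That colour-switching mechanism is precisely the paper's engine, and it is the step you explicitly defer as ``the main obstacle'': Lemma \ref{firstlemmaind} shows that either one colour is bi-dense on the whole vertex set (done, with $t=1$), or there are two large sets across which that colour is sparse; recursing inside both and concatenating the structures found there for the \emph{other} colour doubles its degree parameter, $\Delta_2\mapsto 2\Delta_2+1$, at a multiplicative cost of roughly $f(\rho,\Delta_1)=2^{-O(n)}$ in the set sizes. Iterating this dichotomy $h\approx\log n$ times (Lemma \ref{generallemma}) --- the $\log n$ comes from doubling the degree parameter until $2^h-1\ge n-1$, not from a generic DRC bootstrap --- produces disjoint sets $S_1,\dots,S_t$, each bi-$(f(\rho,d_i),\rho)$-dense in one colour with $d_1+\cdots+d_t=n-t$, and with the opposite colour sparse between them. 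The embedding then splits $H$ over the $S_i$ via Lov\'asz's partition lemma (Lemma \ref{basiclemma}) and needs only bi-density plus a counting lemma (Lemma \ref{previous}) inside each part, with pseudo-randomness and the cross-part sparseness handling edges and non-edges between parts --- in particular it never needs the ``every $n$-set has a large monochromatic common neighbourhood'' property you posit. Since your proposal neither proves the dichotomy/iteration nor supplies a workable substitute for it, it is a plausible plan rather than a proof, and as written its two load-bearing claims (majority colour, self-improving $U$) are false.
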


It is worth noting that the graph $G$ that Kohayakawa, Pr\"{o}mel and R\"{o}dl
use in their proofs is a random graph constructed with projective planes. This
graph is specifically designed so as to contain many copies of our target graph
$H$. Recently, Fox and Sudakov \cite{FS08} showed how to prove the same bounds
as Kohayakawa, Pr\"{o}mel and R\"{o}dl using explicit pseudo-random graphs. We
will follow a similar path.

A graph is said to be pseudo-random if it imitates some of the properties of a
random graph. One such random-like property, introduced by Thomason \cite{T87,
T872}, is that of having approximately the same density between any pair of
large disjoint vertex sets. More formally, we say that a graph $G = (V, E)$ is
{\it $(p, \lambda)$-pseudo-random} if, for all subsets $A, B$ of $V$, the
density of edges $d(A,B)$ between $A$ and $B$ satisfies
\[|d(A, B) - p| \leq \frac{\lambda}{\sqrt{|A||B|}}.\]
The usual random graph $G(N,p)$, where each edge is chosen independently with
probability $p$, is itself a $(p, \lambda)$-pseudo-random graph where $\lambda$
is on the order of $\sqrt{N}$. A well-known explicit example, known to be
$(\frac{1}{2}, \sqrt{N})$-pseudo-random, is the Paley graph $P_N$. This graph
is defined by setting $V$ to be the set $\mathbb{Z}_N$, where $N$ is a prime
which is congruent to 1 modulo 4, and taking two vertices $x, y \in V$ to be
adjacent if and only if $x - y$ is a quadratic residue. For further information
on this and other pseudo-random graphs we refer the reader to \cite{KS06}. Our
next theorem states that, for $\lambda$ sufficiently small, a $(\frac{1}{2},
\lambda)$-pseudo-random graph has very strong Ramsey properties. Theorem
\ref{InducedIntro} follows by applying this theorem to the particular examples
of pseudo-random graphs given above.

\begin{theorem} \label{maininduced}
There exists a constant $c$ such that, for any $n \in \mathbb{N}$ and any
$(\frac{1}{2}, \lambda)$-pseudo-random graph $G$ on $N$ vertices with $\lambda
\leq 2^{-c n \log n} N$, every graph on $n$ vertices occurs as an induced
monochromatic copy in all 2-edge-colorings of $G$. Moreover, all of these
induced monochromatic copies can be found in the same color.
\end{theorem}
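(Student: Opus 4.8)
The plan is to use a two-stage embedding argument. Given a 2-coloring of the edges of the $(\frac12,\lambda)$-pseudo-random graph $G$, we must find, for every target graph $H$ on $n$ vertices, an induced copy of $H$ inside $G$ whose edges are all the same color. The key idea is to build the copy vertex by vertex, maintaining at each step a large "candidate set" $C_i$ of vertices in $G$, each of which could serve as the image of the $i$-th vertex of $H$ given the choices already made. For this to work we need three things simultaneously from a vertex $v$ we are about to add: (1) $v$ must be adjacent in $G$ to all already-embedded vertices that should be neighbors of the new vertex of $H$, and non-adjacent to all that should be non-neighbors (the induced condition); (2) the colors of the edges from $v$ back to the previously embedded vertices must agree with the color class we are trying to produce; and (3) after restricting to those vertices of the candidate sets compatible with $v$, the remaining candidate sets must still be large enough to continue.

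First I would record the basic "density/counting" consequences of $(\frac12,\lambda)$-pseudo-randomness: any two disjoint sets $A,B$ of size at least, say, $8\lambda$ have edge density within $\frac18$ of $\frac12$; and, more importantly, an iterated version — given disjoint sets $W$ and $U$ with $|W|=t$ small and $|U|$ not too small, all but an exponentially small (in $t$) fraction of the $2^t$ "adjacency patterns" to $W$ are realized by roughly $2^{-t}|U|$ vertices of $U$, as long as $\lambda$ is small relative to $2^{-t}|U|$. This is the standard pseudo-random set-intersection estimate and it is where the hypothesis $\lambda \le 2^{-cn\log n}N$ gets used: we will apply it with $t$ up to $n$ and with candidate sets of size down to roughly $2^{-cn\log n}N$, so we need $\lambda$ smaller than a constant times $4^{-n}$ times that size, i.e. of order $2^{-(c+2)n}N$ — comfortably implied by $\lambda\le 2^{-cn\log n}N$ for $n$ large, and for small $n$ the statement is trivial.

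Next comes the step that resolves the color obstruction, and this is the main obstacle. One coloring of $G$ could, in principle, refuse to contain a red induced $K_n$ while another refuses a blue one; the "moreover" clause asks us to fix in advance the color in which *all* $H$ are found. I would handle this by a Ramsey-type dichotomy on $G$ itself: using pseudo-randomness, show that for any 2-coloring of $E(G)$ there is a color, say red, and a still-large vertex subset $V'\subseteq V(G)$ such that within $V'$ the red graph is itself suitably pseudo-random (or at least dense on every pair of not-too-small sets). Concretely, one shows that one cannot have both the red and the blue graph fail to be "locally dense" on large sets, because their densities on any fixed large pair sum to $1$; iterating/cleaning this gives a large $V'$ in which the majority color behaves pseudo-randomly with only a mild loss in parameters. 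Crucially this color is chosen using only $G$ and the coloring, before we see $H$, which gives the uniformity in the "moreover" clause. A technically cleaner route, which I would actually pursue, is to find a large $V'$ and a color such that between every pair of subsets of $V'$ of size $\ge 2^{-c'n\log n}N$ the density of that color lies in $[\frac18,\frac78]$ — enough for the embedding below.

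Finally I would run the greedy embedding. Order $V(H)=\{h_1,\dots,h_n\}$. Start with $C_1=\dots=C_n=V'$. Having embedded $h_1,\dots,h_{i-1}$ to vertices $v_1,\dots,v_{i-1}$ with live candidate sets $C_i,\dots,C_n$, pick $v_i\in C_i$ that (a) has the correct $G$-adjacency pattern to $\{v_1,\dots,v_{i-1}\}$ dictated by $H$ and (b) sends the chosen color to every $v_j$ ($j<i$) with $h_jh_i\in E(H)$; for non-edges of $H$ we only need $G$-non-adjacency and impose no color condition. The pseudo-random counting estimate guarantees that a positive — indeed $\approx 4^{-i}$ — fraction of $C_i$ satisfies (a), and then the colour-density bound on pairs guarantees that a $\ge (\frac18)^{i}$ fraction of *those* also satisfies (b), provided the relevant sets are still of size $\ge 2^{-c'n\log n}N$. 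We then update $C_j\gets \{u\in C_j : u$ has the right adjacency to $v_i$ and, if $h_ih_j\in E(H)$, the right colour to $v_i\}$ for each $j>i$; each update costs at most a factor $\approx 1/8$ per step, so after $n$ steps each surviving candidate set has size at least $8^{-n}|V'| \ge 2^{-O(n)}N$, which stays above the threshold as long as $c$ is chosen large enough. Thus the greedy process never gets stuck, producing the desired monochromatic induced copy of $H$ in the pre-selected colour. Choosing the constant $c$ in the theorem larger than all the implied constants $c'$, $c''$ accumulated above completes the proof.
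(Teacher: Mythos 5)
Your overall architecture (vertex-by-vertex greedy embedding with candidate sets, using pseudo-randomness of $G$ for the non-edges and a one-sided density property of a pre-selected color for the edges) matches the paper in spirit, but the middle step, where you claim a color dichotomy, has a genuine gap, and it is exactly the crux of the problem. You assert that in any $2$-coloring one can find a large $V'$ and a color whose density lies in $[\tfrac18,\tfrac78]$ between \emph{every} pair of subsets of $V'$ of size at least $2^{-c'n\log n}N$, justified by ``the two densities sum to $1$, then iterate/clean.'' That observation only controls the two colors between one fixed pair of sets; it does not produce bi-density of a single color at all exponentially small scales. The natural iteration (if red is not bi-dense, pass to a cleaned side of a red-sparse pair, repeat) yields, after a bounded number of rounds, a set in which the red \emph{overall} density is small, and smallness of the overall density of one color only forces the other color to be dense between subsets comprising at least a constant (about $\sqrt{\rho}$) fraction of the set --- not between subsets of size $2^{-c'n\log n}N$, which is the scale your greedy embedding actually needs once candidate sets have shrunk by factors like $(\rho/2)^n$. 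So the lemma you lean on is unproven, and its difficulty is not incidental: a single-set dichotomy with a \emph{constant} density parameter and only exponential-type losses would essentially give $r_{ind}(H)\le 2^{O(n)}$ (Erd\H{o}s's conjecture) and, in the non-induced setting, $c(\Delta)\le 2^{O(\Delta)}$, both open; this is a strong signal that it cannot follow from the sketched argument, and it may well be false as stated.

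The paper gets around precisely this obstruction by proving a weaker, but sufficient, structural dichotomy (Lemmas \ref{firstlemmaind} and \ref{generallemma}): in any coloring there is a color $q$ and a \emph{sequence} of large disjoint sets $S_1,\ldots,S_t$ with a degree budget $d_1+\cdots+d_t=n-t$, such that color $q$ is bi-$(f(\rho,d_i),\rho)$-dense inside each $S_i$ only at the scale $f(\rho,d_i)=2^{-5n}\rho^{d_i}$ tailored to $d_i$ (with $\rho$ polynomial in $1/n$, not constant), while the \emph{opposite} color is sparse between different $S_i$'s; the symmetric doubling induction is what keeps the total loss at $2^{-O(n\log n)}$. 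The embedding is then correspondingly more delicate than your sketch: $H$ is split across the $S_i$'s according to the $d_i$'s, the bi-density is used only within each $S_i$, and between different $S_i$'s one must actively maintain, via the pseudo-randomness of $G$ and a ``good vertex'' selection at every step, that the bad color stays sparse between the current candidate sets (the factor $(1+\epsilon_1)^i\beta'$ in the paper). Without either your strong dichotomy or this substitute bookkeeping, the step of your greedy argument that asks for color density at least $\tfrac18$ between two exponentially small candidate sets lying in different parts of the structure is unjustified.
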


The theme that unites these two, apparently disparate, questions is the method
we employ in our proofs. A simplified version of this method is the following.
In the first color we attempt to find a large subset in which this color is
very dense. If such a set can be found, we can easily embed the required graph.
If, on the other hand, this is not the case, then there is a large subset in
which the edges of the second color are
well-distributed. Again, this allows us to prove an embedding lemma. Such ideas
are already explicit in the work of Graham,
R\"{o}dl and Ruci\'nski and, arguably, implicit in that of Kohayakawa,
Pr\"{o}mel and R\"{o}dl. The advantage of our method, which extends upon these
ideas, is that it is much more symmetrical between the colors. It is this
symmetry which allows us
to drop a log factor in each case.

In the next section, we will prove Theorem \ref{MaxDegreeIntro}. Section
\ref{InducedSection} contains the proof of Theorem \ref{maininduced}. The last
section contains some concluding remarks together with a discussion of a few
conjectures and open problems. Throughout the paper, we systematically omit
floor and ceiling signs whenever they are not crucial for the sake of clarity
of presentation. All logarithms, unless otherwise stated, are to the base $2$.
We also do not make any serious attempt to optimize absolute constants in our
statements and proofs.

\section{Ramsey number of bounded-degree graphs} \label{BoundedSection}

The {\it edge density} $d(X,Y)$ between two disjoint vertex subsets $X,Y$ of a
graph $G$ is the fraction of pairs $(x,y)
\in X \times Y$ that are edges of $G$. That is,
$d(X,Y)=\frac{e(X,Y)}{|X||Y|}$, where $e(X,Y)$ is the number of edges with one
endpoint in $X$ and
the other in $Y$. In a graph $G$, a vertex subset $U$ is called {\it
bi-$(\epsilon,\rho)$-dense} if, for all disjoint pairs $A,B \subset U$ with
$|A|,|B| \geq \epsilon |U|$, we have $d(A,B) \geq \rho$. We call a graph $G$
{\it bi-$(\epsilon,\rho)$-dense} if its vertex set $V(G)$ is
bi-$(\epsilon,\rho)$-dense. Trivially, if $\epsilon' \leq \epsilon$ and $\rho'
\geq \rho$, then a bi-$(\epsilon',\rho')$-dense graph is also
bi-$(\epsilon,\rho)$-dense. Moreover, if $\epsilon>1/2$, then every graph is
vacuously bi-$(\epsilon,\rho)$-dense as there is no pair of disjoint subsets
each with more than half of the vertices.

Before going into the proof of Theorem \ref{MaxDegreeIntro}, we first sketch
for
comparison the original idea of Graham, R\"odl, and Rucinski \cite{GRR00} which
gives a weaker bound. We then discuss our proof technique. They noticed that if
a graph $G$ on $N$ vertices is bi-$(\epsilon,\rho)$-dense with
$\epsilon=\rho^{\Delta}/(\Delta+1)$ and $N \geq 2\rho^{-\Delta}(\Delta+1)n$,
then $G$ contains every $n$-vertex graph $H$ of maximum degree $\Delta$. This
can be shown by embedding $H$ one vertex at a time. In particular, if a
red-blue edge-coloring of $K_N$ does not contain a monochromatic copy of $H$,
then the red graph is not bi-$(\epsilon,\rho)$-dense, and there are disjoint
vertex subsets $A$ and $B$ with $|A|,|B| \geq \epsilon N$ such that the red
density between them at most $\rho$. It is then possible to iterate, at the
expense of another factor in the exponent of roughly $\log (1/\rho)$, to get a
subset $S$ of size roughly $\epsilon^{\log (1/\rho)}N$ with red edge density at
most $2\rho$ inside. Picking $\rho = \frac{1}{16\Delta}$, a simple greedy
embedding then shows that inside $S$ we can find a blue copy of any graph with
at
most $|S|/4$ vertices and maximum degree $\Delta$.

To summarize, the proof finds a vertex subset $S$ which is either
bi-$(\epsilon,\rho)$-dense in the red graph or is very dense in the blue graph.
In either case, it is easy to find a monochromatic copy of any $n$-vertex graph
$H$ with maximum degree $\Delta$.

We will instead find a sequence of large vertex subsets $S_1,\ldots,S_t$ such
that, in one of the two colors, each of the subsets satisfies some bi-density
condition and the graph between these subsets is very dense.  The bi-density
condition inside each $S_i$ is roughly the condition which ensures that we can
embed any graph on $n$ vertices with maximum degree $d_i$, where
$d_1+\ldots+d_t=\Delta-t+1$. A simple lemma of Lov\'asz guarantees that we can
partition $V(H)=V_1 \cup \ldots \cup V_t$ such that the induced subgraph of $H$
with vertex set $V_i$ has maximum degree at most $d_i$. Our embedding lemma
shows that we can embed a monochromatic copy of $H$ with the image of $V_i$
being in
$S_i$. We now proceed to the details of the proof.

\begin{definition} A graph on $N$ vertices is {\it
$(\alpha,\beta,\rho,\Delta)$-dense} if there is a sequence $S_1,\ldots,S_t$ of
disjoint vertex subsets each of cardinality at least $\alpha N$ and nonnegative
integers $d_1,\ldots,d_t$ such that $d_1+\cdots+d_t=\Delta-t+1$, and the
following holds:
\begin{itemize}
\item for $1 \leq i \leq t$, $S_i$ is bi-$(\rho^{2d_i},\rho)$-dense, and
\item for $1 \leq i < j \leq t$, each vertex in $S_i$ has at least
$(1-\beta)|S_j|$ neighbors in $S_j$.
\end{itemize}
\end{definition}

Note that since $d_1+\cdots+d_t=\Delta-t+1$ and each $d_i$ is
nonnegative, we must have $t \leq \Delta+1$.

Trivially, if a graph is $(\alpha',\beta',\rho,\Delta')$-dense and $\alpha'
\geq \alpha$, $\beta' \leq \beta$, and $\Delta' \geq \Delta$,
then it is also  $(\alpha,\beta,\rho,\Delta)$-dense.

We say a red-blue edge-coloring of the complete graph $K_N$ is {\it
$(\alpha,\beta,\rho,\Delta_1,\Delta_2)$-dense} if the red graph is
$(\alpha,\beta,\rho,\Delta_1)$-dense or the blue graph is
$(\alpha,\beta,\rho,\Delta_2)$-dense. We say that
$(\alpha,\beta,\rho,\Delta_1,\Delta_2)$ is {\it universal} if, for every $N$,
every red-blue edge-coloring of $K_N$ is
$(\alpha,\beta,\rho,\Delta_1,\Delta_2)$-dense.

\begin{lemma}\label{firstlemma}
If $\beta \geq 4(\Delta_2+1)\rho$ and $(\alpha,\beta,\rho,\Delta_1,\Delta_2)$
is universal, then
$(\frac{1}{2}\rho^{2\Delta_1}\alpha,\beta,\rho,\Delta_1,2\Delta_2+1)$ is also
universal.
\end{lemma}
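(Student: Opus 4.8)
The plan is to show directly that every $2$-colouring of $K_N$ is $(\frac{1}{2}\rho^{2\Delta_1}\alpha,\beta,\rho,\Delta_1,2\Delta_2+1)$-dense. Fix such a colouring. Since $(\alpha,\beta,\rho,\Delta_1,\Delta_2)$ is universal, either the red graph is already $(\alpha,\beta,\rho,\Delta_1)$-dense, in which case, as $\frac{1}{2}\rho^{2\Delta_1}\alpha\le\alpha$, it is also $(\frac{1}{2}\rho^{2\Delta_1}\alpha,\beta,\rho,\Delta_1)$-dense and we are done; or the blue graph is $(\alpha,\beta,\rho,\Delta_2)$-dense, witnessed by disjoint sets $S_1,\dots,S_t$, each of size at least $\alpha N$, and nonnegative integers $d_1,\dots,d_t$ with $d_1+\cdots+d_t=\Delta_2-t+1$ (so in particular $t\le\Delta_2+1$). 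From here the goal is either to exhibit a red witness of degree $\Delta_1$ or to upgrade the blue witness to one of effective degree $2\Delta_2+1$.

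The core is a second win--win, carried out inside the sets $S_i$ and with the colours in the opposite roles. For each $i$, ask whether $S_i$ is bi-$(\rho^{2\Delta_1},\rho)$-dense in the \emph{red} graph. If this holds for even one index, then the one-term sequence $(S_i)$ with label $\Delta_1=\Delta_1-1+1$ --- whose near-completeness clause is vacuous and for which $|S_i|\ge\alpha N\ge\frac{1}{2}\rho^{2\Delta_1}\alpha N$ --- certifies that the red graph is $(\frac{1}{2}\rho^{2\Delta_1}\alpha,\beta,\rho,\Delta_1)$-dense, and again we are done. Otherwise every $S_i$ fails this condition, so there are disjoint $A_i,B_i\subseteq S_i$ with $|A_i|,|B_i|\ge\rho^{2\Delta_1}|S_i|\ge\rho^{2\Delta_1}\alpha N$ and red density $d(A_i,B_i)<\rho$, hence blue density $d(A_i,B_i)>1-\rho$. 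Since $A_i$ and $B_i$ are $\rho^{2\Delta_1}$-sized fractions of the blue-bi-$(\rho^{2d_i},\rho)$-dense set $S_i$, a short calculation shows each is blue-bi-$(\rho^{2(d_i-\Delta_1)},\rho)$-dense, hence carries a blue-bi-density label $\max(d_i-\Delta_1,0)$.

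Next comes the clean-up, which is where the hypothesis $\beta\ge4(\Delta_2+1)\rho$ and the factor $\frac{1}{2}$ enter. From $d(A_i,B_i)>1-\rho$ and a Markov-type count, all but a $\rho/\beta\le\frac{1}{4(\Delta_2+1)}$ fraction of the vertices of $A_i$ have at least $(1-\beta)|B_i|$ blue neighbours in $B_i$; since at most $2\Delta_2+1<2(\Delta_2+1)$ near-completeness requirements will ever be imposed on a single set (one for each later set in the final order), all these deletions can be performed while discarding at most half of each set. This leaves sets $A_i',B_i'$ of size at least $\frac{1}{2}\rho^{2\Delta_1}|S_i|\ge\frac{1}{2}\rho^{2\Delta_1}\alpha N$, still blue-bi-$(\rho^{2\max(d_i-\Delta_1,0)},\rho)$-dense (passing to a half-sized subset costs only a factor $2$ in the bi-density parameter, absorbed since $\rho\le1/2$), and meeting whatever near-completeness relations we demand.

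Finally I would reassemble these cleaned pieces --- together with the untouched sets $S_i$ and the near-completeness relations inherited from the original blue witness --- into a single sequence whose degree labels sum to $(2\Delta_2+1)-(\text{length})+1$, thereby witnessing effective degree $2\Delta_2+1$. Making the labels add up to exactly $2\Delta_2+1$ while keeping every set in the sequence almost completely blue-joined to all the later ones is the delicate combinatorial core of the argument, and is the step I expect to be the main obstacle: the asymmetry of the near-completeness clause (it is demanded only \emph{from} earlier sets \emph{into} later ones) severely constrains how freshly split pieces may be ordered relative to one another, so the order and the distribution of the degree budget must be chosen with care. It is precisely in this reorganization that the inequality $t\le\Delta_2+1$, the factor $\frac{1}{2}$, and the slack in $\beta\ge4(\Delta_2+1)\rho$ are consumed.
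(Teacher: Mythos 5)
Your first dichotomy (red bi-dense on the whole vertex set, else a blue witness $S_1,\dots,S_t$) matches the paper, but the route you take afterwards cannot be completed, and the step you yourself flag as ``the main obstacle'' is not merely delicate --- it is impossible as set up. The problem is the degree bookkeeping. A set may always be assigned a \emph{smaller} label than its bi-density supports (since $\rho^{2d'} \ge \rho^{2d}$ for $d' \le d$), but never a larger one, and your pieces $A_i, B_i \subset S_i$ are only guaranteed blue-bi-$(\rho^{2(d_i-\Delta_1)},\rho)$-dense, i.e.\ labels at most $\max(d_i-\Delta_1,0)$. Writing the requirement on a witness of effective degree $2\Delta_2+1$ as $\sum(\mbox{label}+1) = (2\Delta_2+1)+1 = 2(\Delta_2+1)$, and noting that the original witness has $\sum(d_i+1)=\Delta_2+1$, your $2t$ pieces carry total budget $\sum_i 2\bigl(\max(d_i-\Delta_1,0)+1\bigr) < 2(\Delta_2+1)$ as soon as $\Delta_1 \ge 1$ and some $d_i \ge 1$; no ordering or redistribution of the budget can recover this deficit, and you cannot pad the sequence with the ``untouched sets $S_i$'' because they are not disjoint from $A_i, B_i$. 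There is a second gap: the near-completeness relations are \emph{not} inherited by the shrunken pieces. The original witness only bounds red degrees from $S_i$ into $S_j$ by $\beta|S_j|$, and since $\beta \ge 4(\Delta_2+1)\rho$ while $|B_j| \ge \rho^{2\Delta_1}|S_j|$ may be far smaller than $\beta|S_j|$, a vertex of $A_i$ could be joined in red to \emph{all} of $B_j$; your clean-up step only controls the pair $A_i, B_i$ inside a single $S_i$, not the cross pairs.

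The paper avoids both problems by never splitting the existing blue witness. It uses the failure of red bi-density once, on the whole vertex set, to get disjoint $V_0, V_1$ of size at least $\rho^{2\Delta_1}N$ with red density less than $\rho$ between them; it trims $V_0$ to $V_0'$ (vertices with at most $2\rho|V_1|$ red edges to $V_1$, losing at most half) and invokes the universality hypothesis \emph{inside} $V_0'$ to get a full blue witness $S_1,\dots,S_t$ of budget $\Delta_2+1$; it then trims $V_1$ against each $S_i$ (this is where $t\le\Delta_2+1$ and $\beta\ge 4(\Delta_2+1)\rho$ are spent, again losing only half) and invokes universality a \emph{second} time inside the trimmed $V_1'$ to obtain another full witness $T_1,\dots,T_u$. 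Concatenating $T_1,\dots,T_u,S_1,\dots,S_t$ gives labels summing to $(2\Delta_2+1)-(u+t)+1$, and the near-completeness from each $T_i$ into each $S_j$ is exactly what the trimming of $V_1$ bought. In short, the doubling of the degree budget comes from two disjoint applications of the inductive hypothesis, not from subdividing the witness you already have; that recursive reuse of universality is the idea missing from your proposal.
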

\begin{proof}
Consider a red-blue edge-coloring of a complete graph $K_N$. If the red graph
is bi-$(\rho^{2\Delta_1},\rho)$-dense, then, taking $t=1$, $S_1=V(K_N)$ and
$d_1=\Delta_1$, we see that the red graph is
$(\alpha,\beta,\rho,\Delta_1)$-dense and we are done. So we may suppose that
there are disjoint vertex subsets $V_0,V_1$ with $|V_0|,|V_1| \geq
\rho^{2\Delta_1}N$ such that the red density between them is less than $\rho$.
Delete from $V_0$ all vertices in at least $2\rho|V_1|$ red edges with vertices
in $V_1$; the remaining subset $V_0'$ has cardinality at least
$\frac{1}{2}|V_0| \geq \frac{1}{2}\rho^{2\Delta_1}N$. Since $(\alpha, \beta,
\rho, \Delta_1, \Delta_2)$ is universal, the coloring restricted to $V_0'$ is
$(\alpha,\beta,\rho,\Delta_1,\Delta_2)$-dense. Thus, the
red graph is $(\alpha,\beta,\rho,\Delta_1)$-dense (in which case we are again
done)
or the blue graph is $(\alpha,\beta,\rho,\Delta_2)$-dense. We may suppose the
latter holds, and there are subsets $S_1,\ldots,S_t$ each of cardinality at
least $\alpha |V_0'| \geq \frac{1}{2}\rho^{2\Delta_1}\alpha N$ and nonnegative
integers $d_1,\ldots,d_t$ such that $d_1+\cdots+d_t=\Delta_2-t+1$, and the
following holds:
\begin{itemize}
\item for $1 \leq i \leq t$, $S_i$ is bi-$(\rho^{2d_i},\rho)$-dense, and
\item for $1 \leq i < j \leq t$, each vertex in $S_i$ has at least
$(1-\beta)|S_j|$ neighbors in $S_j$.
\end{itemize}

Since each vertex in $V_0'$ (and hence in each $S_i$) is in at most
$2\rho|V_1|$ red edges with vertices in $V_1$, there are
at most $2\rho|S_i||V_1|$ red edges between $S_i$ and $V_1$. For $1\leq i \leq
t$, delete from $V_1$ all vertices
in at least $4(\Delta_2+1)\rho|S_i|$ red edges with vertices in $S_i$. For any
given $i$, there can be at most
$\frac{1}{2(\Delta_2 + 1)} |V_1|$ such vertices. Therefore, since $t \leq
\Delta_2 + 1$, the set
$V_1'$ of remaining vertices has cardinality at least $|V_1|-t \cdot
\frac{1}{2(\Delta_2+1)}|V_1| \geq |V_1|/2$.

Since $(\alpha, \beta, \rho, \Delta_1, \Delta_2)$ is universal, the coloring
restricted to $V_1'$ is
$(\alpha,\beta,\rho,\Delta_1,\Delta_2)$-dense. Thus, the red graph is
$(\alpha,\beta,\rho,\Delta_1)$-dense (in which case we are done) or the blue
graph is $(\alpha,\beta,\rho,\Delta_2)$-dense. We may suppose the latter holds,
and there are subsets $T_1,\ldots,T_u$ each of cardinality at least $\alpha
| V_1'| \geq \frac{1}{2}\rho^{2\Delta_1}\alpha N$ and nonnegative integers
$e_1,\ldots,e_u$ such that $e_1+\cdots+e_u=\Delta_2-u+1$, and the following
holds:
\begin{itemize}
\item for $1 \leq i \leq u$, $T_i$ is bi-$(\rho^{2e_i},\rho)$-dense, and
\item for $1 \leq i < j \leq u$, each vertex in $T_i$ has at least
$(1-\beta)|T_j|$ neighbors in $T_j$.
\end{itemize}

Note that
$e_1+\cdots+e_u+d_1+\cdots+d_t=\Delta_2-u+1+\Delta_2-t+1=(2\Delta_2+1)-(u+t)+1$.
Moreover,
$\beta \geq 4(\Delta_2+1)\rho$, implying that for all $1 \leq i \leq u$ and all
$1 \leq j \leq t$ every vertex in $T_i$ has at least $(1 - \beta) |S_j|$
neighbors in $S_j$.
Therefore, the sequence $T_1,\ldots,T_u,S_1,\ldots,S_t$ implies that the blue
graph is
$(\frac{1}{2}\rho^{2\Delta_1}\alpha,\beta,\rho,2\Delta_2+1)$-dense, completing
the proof.
\end{proof}

By symmetry, the above lemma implies that if $\beta \geq 4(\Delta_1+1)\rho$ and
$(\alpha,\beta,\rho,\Delta_1,\Delta_2)$
is universal, then
$(\frac{1}{2}\rho^{2\Delta_2}\alpha,\beta,\rho,2\Delta_1+1,\Delta_2)$ is also
universal.

As already mentioned, if $\epsilon>1/2$, every graph $G$ is vacuously
bi-$(\epsilon,\rho)$-dense. As $\rho^{2 \cdot 0}=1>1/2$, setting $t=1$ and
$S_1=V(G)$, we have that every graph $G$ is $(\alpha,\beta,\rho,0)$-dense. This
shows that $(1,2\rho,\rho,0,0)$ is universal, which is the base case $h=0$ in
the induction proof of the next lemma.

\begin{lemma}\label{secondlemma}
Let $h$ be a nonnegative integer and $D:=2^h-1$. Then
$(2^{-2h}\rho^{6D-4h},2(D+1)\rho,\rho,D,D)$ is universal.
\end{lemma}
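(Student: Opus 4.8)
The plan is to prove Lemma~\ref{secondlemma} by induction on $h$, using Lemma~\ref{firstlemma} (and its symmetric counterpart) as the inductive engine. The base case $h=0$ gives $D=0$, and the statement reads ``$(1,2\rho,\rho,0,0)$ is universal,'' which is precisely the observation made in the paragraph preceding the lemma: since $\rho^{0}=1>1/2$, every graph is vacuously $(\alpha,\beta,\rho,0)$-dense with the trivial sequence $t=1$, $S_1=V(G)$, $d_1=0$. So the real content is the inductive step.

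For the inductive step, suppose the statement holds for $h$; I want to deduce it for $h+1$. Write $D=2^h-1$ and $D'=2^{h+1}-1=2D+1$. The inductive hypothesis says $(\alpha_h,\beta_h,\rho,D,D)$ is universal with $\alpha_h=2^{-2h}\rho^{6D-4h}$ and $\beta_h=2(D+1)\rho$. The idea is to apply Lemma~\ref{firstlemma} once to bump the second parameter from $D$ to $2D+1=D'$, obtaining that $(\tfrac12\rho^{2D}\alpha_h,\beta_h,\rho,D,D')$ is universal, and then apply the symmetric version of Lemma~\ref{firstlemma} to bump the first parameter from $D$ to $2D+1=D'$ as well, obtaining that $(\tfrac12\rho^{2D'}\cdot\tfrac12\rho^{2D}\alpha_h,\beta_h,\rho,D',D')$ is universal. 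One must check the hypothesis $\beta\ge 4(\Delta_2+1)\rho$ each time: since $\beta_h=2(D+1)\rho$ and $\beta_{h+1}=2(D'+1)\rho=4(D+1)\rho$, the new $\beta$ is large enough and we are free to enlarge $\beta$ (monotonicity in the $(\alpha,\beta,\rho,\Delta)$-dense definition), so I would carry $\beta=\beta_{h+1}=2(D'+1)\rho$ through both applications; then $4(\Delta_2+1)\rho=4(D+1)\rho=\beta_{h+1}$ on the first application, and likewise on the second.

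The remaining task is the arithmetic bookkeeping on the $\alpha$ parameter: I need to verify that $\tfrac12\rho^{2D'}\cdot\tfrac12\rho^{2D}\alpha_h \ge \alpha_{h+1}=2^{-2(h+1)}\rho^{6D'-4(h+1)}$, after which monotonicity in $\alpha$ finishes the step. Substituting $\alpha_h=2^{-2h}\rho^{6D-4h}$ and $D'=2D+1$, the left side is $2^{-2h-2}\rho^{2(2D+1)+2D+6D-4h}=2^{-2h-2}\rho^{10D+2-4h}$, while the right side is $2^{-2h-2}\rho^{6(2D+1)-4h-4}=2^{-2h-2}\rho^{12D+2-4h}$. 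Since $\rho\le 1$ and $12D+2-4h\ge 10D+2-4h$, indeed $\rho^{10D+2-4h}\ge\rho^{12D+2-4h}$, so the left side is at least the right side, as needed. (One should note $\rho<1$, which is implicit since $\rho$ is a density threshold; if $\rho\ge1$ the bi-density condition is essentially empty and the statement is trivial anyway.)

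The main obstacle is not any single hard step but making sure the two applications of the asymmetric Lemma~\ref{firstlemma} compose correctly: Lemma~\ref{firstlemma} is stated only for raising $\Delta_2$, so I must invoke its stated symmetric consequence for raising $\Delta_1$, and I must be careful that after the first application the pair $(D,D')$ still has the larger of the two degrees in the $\Delta_2$ slot when I want to fix $\Delta_1$ --- which is why the symmetric form (raising $\Delta_1$ while leaving $\Delta_2$ fixed) is exactly what is required, and its hypothesis $\beta\ge 4(\Delta_1+1)\rho$ with $\Delta_1=D$ is again met by $\beta_{h+1}=4(D+1)\rho$. Once the ordering of the two applications and the monotonicity reductions are pinned down, the exponent accounting above closes the induction.
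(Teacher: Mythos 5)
Your proposal is correct and follows essentially the same route as the paper: induction on $h$ with base case $(1,2\rho,\rho,0,0)$, enlarging $\beta$ to $4(D+1)\rho=2(D'+1)\rho$ by monotonicity, then one application of Lemma~\ref{firstlemma} followed by its symmetric version. One small arithmetic slip: the left-hand exponent is $2(2D+1)+2D+6D-4h=12D+2-4h$, not $10D+2-4h$, so in fact $\tfrac12\rho^{2D'}\cdot\tfrac12\rho^{2D}\alpha_h$ equals $2^{-2(h+1)}\rho^{6D'-4(h+1)}$ exactly (as in the paper), which only makes your final monotonicity step superfluous rather than wrong.
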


\begin{proof}
As mentioned above, the proof is by induction on $h$, and the base case $h=0$
is satisfied. Suppose it is satisfied for $h$, and we wish
to show it for $h+1$. Let $D=2^h-1$, $D'=2D+1=2^{h+1}-1$, and
$\beta=4(D+1)\rho=2(D'+1)\rho \geq 2(D+1)\rho$. Recall that, for $\beta \geq
\beta'$, if $(\alpha, \beta', \rho, \Delta_1, \Delta_2)$ is universal then so
is $(\alpha, \beta, \rho, \Delta_1, \Delta_2)$. Therefore, since $(2^{-2h}
\rho^{6D-4h}, 2(D+1)\rho, \rho, D, D)$ is universal, $(2^{-2h} \rho^{6D-4h},
\beta, \rho, D, D)$ is also. Applying Lemma \ref{firstlemma}, we have that
$(\frac{1}{2}\rho^{2D}2^{-2h}\rho^{6D-4h},\beta,\rho,D,2D+1)$ is universal.
Applying the symmetric version of Lemma \ref{firstlemma} mentioned above,
we have that
$$\Big(\frac{1}{2}\rho^{2(2D+1)}\frac{1}{2}\rho^{2D}2^{-2h}\rho^{6D-4h},\beta,\rho,2D+1,2D+1\Big)=(2^{-2(h+1)}\rho^{6D'-4(h+1)},\beta,\rho,D',D'),$$
is universal, which completes the proof by induction.
\end{proof}

We will use the following lemma of Lov\'asz \cite{L66}.

\begin{lemma}\label{basiclemma}
If $H$ has maximum degree $\Delta$ and $d_1,\ldots,d_t$ are nonnegative
integers satisfying
$d_1+\cdots+d_t=\Delta-t+1$, then there is a partition $V(H)=V_1 \cup \ldots
\cup V_t$ such that for $1 \leq i \leq t$, the induced subgraph
of $H$ with vertex set $V_i$ has maximum degree at most $d_i$.
\end{lemma}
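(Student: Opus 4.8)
The plan is to prove this by an extremal (local-switching) argument. Among all ordered partitions $V(H) = V_1 \cup \cdots \cup V_t$ of the vertex set, I would pick one that minimizes the weighted count of edges lying inside parts,
\[
\Phi(V_1,\ldots,V_t) \;=\; \sum_{i=1}^{t} \frac{e(H[V_i])}{d_i+1},
\]
where $e(H[V_i])$ is the number of edges of $H$ with both endpoints in $V_i$. Since there are only finitely many partitions and each weight $1/(d_i+1)$ is a fixed positive number, a minimizer exists. The claim is that this minimizer already satisfies the conclusion: $H[V_i]$ has maximum degree at most $d_i$ for every $i$.

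To establish the claim I would argue by contradiction: suppose some part $V_i$ contains a vertex $v$ with more than $d_i$ neighbors inside $V_i$, i.e. $\deg_{H[V_i]}(v) \ge d_i + 1$. The key observation is a pigeonhole bound on the neighborhood of $v$. Since $\deg_H(v) \le \Delta$, the vertex $v$ has at most $\Delta - (d_i + 1)$ neighbors outside $V_i$. On the other hand, the hypothesis $d_1 + \cdots + d_t = \Delta - t + 1$ gives $\sum_{j=1}^{t}(d_j+1) = \Delta + 1$, so $\sum_{j \ne i}(d_j+1) = \Delta - d_i$. If $v$ had at least $d_j + 1$ neighbors in every part $V_j$ with $j \ne i$, its number of neighbors outside $V_i$ would be at least $\Delta - d_i > \Delta - d_i - 1$, a contradiction. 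Hence there is some $j \ne i$ in which $v$ has at most $d_j$ neighbors.

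Now move $v$ from $V_i$ into this part $V_j$ and track the effect on $\Phi$. Deleting $v$ from $V_i$ removes exactly $\deg_{H[V_i]}(v) \ge d_i + 1$ edges from inside $V_i$, so the $i$-th term of $\Phi$ drops by at least $1$; inserting $v$ into $V_j$ adds at most $d_j$ edges inside $V_j$, so the $j$-th term rises by at most $d_j/(d_j+1) < 1$; all other terms are unchanged. Thus $\Phi$ strictly decreases, contradicting the choice of the partition. Therefore in the minimizing partition every vertex of $V_i$ has at most $d_i$ neighbors in $V_i$, which is precisely the assertion of the lemma.

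I expect the argument to be essentially bookkeeping once the weights are chosen correctly, so the only real point to get right is the design of the potential: an unweighted edge count does not force the relocation of $v$ to decrease $\Phi$, whereas the weights $1/(d_i+1)$ make the gain at least $1$ and the loss strictly less than $1$. I would also double-check the degenerate case $d_i = 0$, where the conclusion demands that $V_i$ be an independent set; there the identity $\sum_j (d_j+1) = \Delta + 1$ is exactly what makes the pigeonhole step go through, so no separate treatment is needed.
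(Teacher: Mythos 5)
Your proof is correct. Note that the paper does not prove this lemma at all---it is quoted as a known result of Lov\'asz \cite{L66}---and your weighted local-switching argument is essentially the classical proof of that result: minimizing the potential $\sum_i e(H[V_i])/(d_i+1)$, using the identity $\sum_j (d_j+1)=\Delta+1$ for the pigeonhole step, and observing that relocating a vertex of excess degree drops its part's term by at least $1$ while raising the receiving term by at most $d_j/(d_j+1)<1$ are exactly the standard ingredients. The degenerate situations are also fine: when $d_i=0$ the same bookkeeping applies, and when $t=1$ we have $d_1=\Delta$, so a violating vertex cannot exist in the first place and the switching step is never invoked.
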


The next simple lemma shows that in a bi-$(\epsilon,\rho)$-dense graph, for any
large vertex subset $B$, there are few vertices with few neighbors in $B$.

\begin{lemma}\label{previous}
If $G$ is a bi-$(\epsilon,\rho)$-dense graph on $n$ vertices with $\epsilon
\geq 1/n$ and $B \subset V(G)$ with $|B| \geq 2\epsilon n$, then there are less
than $3\epsilon n$ vertices in $G$ with fewer than $\frac{\rho}{2} |B|$
neighbors
in $B$.
\end{lemma}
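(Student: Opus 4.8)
The plan is to argue by contradiction, extracting from a large set of low-degree vertices a pair of disjoint sets that witnesses a violation of the bi-$(\epsilon,\rho)$-density condition. Let $A$ denote the set of vertices $v\in V(G)$ with fewer than $\frac{\rho}{2}|B|$ neighbors in $B$, and suppose for contradiction that $|A|\ge 3\epsilon n$.

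First I would pick an arbitrary subset $X\subseteq A$ with $|X|=\epsilon n$; this is possible since $|A|\ge 3\epsilon n$, and the hypothesis $\epsilon\ge 1/n$ guarantees $\epsilon n\ge 1$ so that such a set is nonempty. Next I set $Y:=B\setminus X$. Since $|X|=\epsilon n\le\frac12|B|$ (using $|B|\ge 2\epsilon n$), we get $|Y|\ge |B|-|X|\ge\frac12|B|\ge\epsilon n$. Hence $X$ and $Y$ are disjoint vertex subsets of $V(G)$, each of cardinality at least $\epsilon n=\epsilon|V(G)|$.

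Now I would play the two resulting facts against each other. On one hand, bi-$(\epsilon,\rho)$-density of $G$ forces $d(X,Y)\ge\rho$. On the other hand, since $X$ and $Y$ are disjoint, $e(X,Y)=\sum_{v\in X}|N(v)\cap Y|$, and every vertex $v\in X\subseteq A$ satisfies $|N(v)\cap Y|\le|N(v)\cap B|<\frac{\rho}{2}|B|\le\rho|Y|$, the last step using $|Y|\ge\frac12|B|$. Summing over the $|X|$ vertices of $X$ gives $e(X,Y)<\rho|X||Y|$, so $d(X,Y)<\rho$, a contradiction. Therefore $|A|<3\epsilon n$, which is exactly the claim.

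The only point requiring any care --- and essentially the only place the hypotheses are used --- is that $A$ and $B$ need not be disjoint, so one cannot simply take $Y=B$; passing instead to $Y=B\setminus X$ and invoking $|B|\ge 2\epsilon n$ keeps $|Y|$ both at least $\epsilon n$ (so the pair $(X,Y)$ is admissible in the density condition) and at least $\frac12|B|$ (so that the per-vertex degree bound $\frac{\rho}{2}|B|$ is dominated by $\rho|Y|$). The condition $\epsilon\ge 1/n$ is needed only to ensure $\epsilon n\ge 1$, and the slack in the constant $3$ comfortably absorbs any rounding. I do not anticipate any genuine obstacle beyond this bookkeeping.
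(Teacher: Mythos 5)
Your proof is correct and follows essentially the same route as the paper: assume the low-degree set $A$ is large and contradict bi-$(\epsilon,\rho)$-density using the per-vertex bound $\frac{\rho}{2}|B|\le\rho|Y|$ once $|Y|\ge\frac12|B|$. The only (immaterial) difference is how disjointness is arranged --- you carve a subset $X\subseteq A$ of size $\epsilon n$ and take $Y=B\setminus X$, whereas the paper splits $A\cap B$ into two halves and removes one half from each of $A$ and $B$; your variant in fact only needs $|A|\ge\epsilon n$, with the constant $3$ merely absorbing rounding.
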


\begin{proof}
Suppose for contradiction that the set $A$ of vertices in $G$ with fewer than
$\frac{\rho}{2}|B|$ neighbors in $B$ satisfies
$|A| \geq 3\epsilon n$. Partition $A \cap B=C_1 \cup C_2$ with $|C_1| \leq
| C_2|$ into two sets of size as equal as possible. Then the sets $A'=A
\setminus C_2$ and $B'= B \setminus C_1$ are disjoint, $|A'| \geq \lfloor |A|/2
\rfloor \geq \epsilon n$, $|B'| \geq |B|/2 \geq \epsilon n$, the number of
edges between $A'$ and $B'$ is less than $|A'|\frac{\rho}{2}|B|$, and the edge
density between $A'$ and $B'$ is less than
$\frac{|A'|\frac{\rho}{2}|B|}{|A'||B'|}=\frac{\rho}{2}\frac{|B|}{|B'|} \leq
\rho$, contradicting $G$ is bi-$(\epsilon,\rho)$-dense.
\end{proof}

The following embedding lemma is the last ingredient for the proof of Theorem
\ref{MaxDegreeIntro}.

\begin{lemma}\label{embedlemma}
If $\rho \leq 1/30$ and $G$ is a graph on $N \geq 4(2/\rho)^{2
\Delta}\alpha^{-1}
n$ vertices which is $(\alpha,\frac{1}{2\Delta},\rho,\Delta)$-dense, then $G$
contains every graph $H$ on $n$ vertices with maximum degree at most $\Delta$.
\end{lemma}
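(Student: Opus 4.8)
The plan is to embed $H$ vertex-by-vertex using the structure supplied by the hypothesis that $G$ is $(\alpha,\frac{1}{2\Delta},\rho,\Delta)$-dense. Fix the sequence $S_1,\dots,S_t$ of disjoint vertex subsets, each of size at least $\alpha N$, together with nonnegative integers $d_1,\dots,d_t$ with $d_1+\cdots+d_t=\Delta-t+1$, so that each $S_i$ is bi-$(\rho^{2d_i},\rho)$-dense and every vertex of $S_i$ has at least $(1-\frac{1}{2\Delta})|S_j|$ neighbors in $S_j$ for $i<j$. Apply Lemma \ref{basiclemma} to partition $V(H)=V_1\cup\cdots\cup V_t$ so that the induced subgraph on $V_i$ has maximum degree at most $d_i$. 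The goal is to find an embedding $\phi$ of $H$ into $G$ with $\phi(V_i)\subseteq S_i$ for all $i$.

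Next I would embed the vertices of $H$ one at a time, processing them in any order that respects nothing in particular except that we track, for each not-yet-embedded vertex $v\in V_i$, a ``candidate set'' $C(v)\subseteq S_i$ of vertices of $S_i$ still available as images of $v$, consistent with the partial embedding so far. Initially $C(v)=S_i$ for $v\in V_i$. When we embed $v\in V_i$, we must pick its image inside $C(v)$ avoiding the $\le n$ already-used vertices; then for each unembedded neighbor $w$ of $v$ we update $C(w)$ by intersecting with the neighborhood of $\phi(v)$. The key point is to control how fast the candidate sets shrink. There are two types of constraint on a vertex $w\in V_i$: neighbors $w$ has in its own class $V_i$ (at most $d_i$ of them), and neighbors in other classes $V_j$. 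For the within-class neighbors, use the bi-$(\rho^{2d_i},\rho)$-density of $S_i$ via Lemma \ref{previous}: as long as the current candidate set $C(w)$ has size at least $2\rho^{2d_i}|S_i|$, fewer than $3\rho^{2d_i}|S_i|$ vertices of $S_i$ have fewer than $\frac{\rho}{2}|C(w)|$ neighbors in $C(w)$, so we can choose $\phi(v)$ (also avoiding these bad sets for each of the $\le d_i$ unembedded within-class neighbors of $v$, and avoiding used vertices) and each within-class constraint multiplies $|C(w)|$ by at least $\rho/2$; after all $\le d_i$ such constraints, $|C(w)|\ge (\rho/2)^{d_i}|S_i|$, which comfortably exceeds $2\rho^{2d_i}|S_i|$ since $\rho\le 1/30$ (here $(\rho/2)^{d_i} \ge 2\rho^{2d_i}$ because $\rho/4 \ge \rho^{d_i+1}$ for $\rho \le 1/4$). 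For the cross-class neighbors $w\in V_j$ with $v\in V_i$, $i\ne j$: each vertex of the relevant $S$ misses at most $\frac{1}{2\Delta}$ fraction of the other $S$, so imposing such a constraint removes at most $\frac{1}{2\Delta}|S_j|$ candidates for $w$; summing over the at most $\Delta$ cross-class neighbors of $w$ removes at most $\frac12|S_j|$ in total. Combining, every candidate set stays of size at least, say, $(\rho/2)^{\Delta}|S_i| - \frac12 |S_i| \cdot(\text{correction})$; a clean way is to note $|C(w)|$ at the moment we embed $w$ is at least $(\rho/2)^{d_j}\big(|S_j| - \tfrac12|S_j|\big) \ge \tfrac12 (\rho/2)^{\Delta}|S_j| \ge \tfrac12(\rho/2)^{\Delta}\alpha N$, and this must exceed $n$ so that an unused image exists. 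The bound $N\ge 4(2/\rho)^{2\Delta}\alpha^{-1}n$ gives $\tfrac12(\rho/2)^{\Delta}\alpha N \ge 2(\rho/2)^{\Delta}(2/\rho)^{2\Delta}n = 2\cdot 2^{\Delta}\rho^{-\Delta} \cdot 2^{-\Delta} \cdot \rho^{\Delta}\cdots$ — more simply it is at least $2n > n$, with room to spare, so the embedding never gets stuck.

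The step I expect to be the main obstacle is bookkeeping the order in which constraints are applied: a vertex $w$ may accumulate constraints from several already-embedded neighbors before it is itself embedded, and one must verify that at every intermediate stage the candidate set is still large enough that the ``few bad vertices'' estimate from Lemma \ref{previous} applies (i.e. $|C(w)|\ge 2\rho^{2d_i}|S_i|$ throughout, not just at the end). The cleanest fix is to prove by induction on the number of embedded vertices the invariant that for every unembedded $w\in V_i$ with $a$ already-embedded within-class neighbors and any number of already-embedded cross-class neighbors, $|C(w)| \ge (\rho/2)^{a}\big(|S_i| - \tfrac12|S_i|\big)$; the within-class factor is controlled by Lemma \ref{previous} and $\rho\le1/30$ guarantees $(\rho/2)^{a}|S_i|/2 \ge 2\rho^{2d_i}|S_i|$ for all $a\le d_i$, so the next application is legitimate. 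Once this invariant is in place, the final size bound above shows a valid image always exists, and we obtain the desired copy of $H$ in $G$.
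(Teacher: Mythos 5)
Your overall scheme matches the paper's — partition $V(H)$ via Lemma \ref{basiclemma}, embed vertex-by-vertex while tracking candidate sets, control within-class losses by Lemma \ref{previous} and cross-class losses by the $(1-\frac{1}{2\Delta})$-degree condition — but there is a genuine gap: you embed the vertices ``in any order,'' and the argument does not survive that choice. The density hypothesis is asymmetric: only for $i<j$ is every vertex of $S_i$ guaranteed at least $(1-\frac{1}{2\Delta})|S_j|$ neighbors in $S_j$; nothing is guaranteed about how an individual vertex of $S_j$ meets $S_i$. So if a vertex $w\in V_i$ acquires an already-embedded cross-class neighbor lying in a class of \emph{higher} index, you have no lower bound on how much of $C(w)$ survives, and your assertion that ``each vertex of the relevant $S$ misses at most $\frac{1}{2\Delta}$ fraction of the other $S$'' is simply not available in that direction.

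Second, even when the embedded cross-class neighbor lies in a lower-index class, the loss it inflicts on $C(w)$ is the absolute quantity $\frac{1}{2\Delta}|S_i|$, not a $\frac{1}{2\Delta}$ fraction of the current candidate set. If within-class constraints have already shrunk $C(w)$ to about $(\rho/2)^a|S_i|$, a single later cross-class constraint can wipe it out completely (for $a\geq 2$ one can easily have $(\rho/2)^a \ll \frac{1}{2\Delta}$), so the invariant you propose, $|C(w)|\geq \frac{1}{2}(\rho/2)^a|S_i|$, cannot be maintained when the two kinds of constraints interleave — which is exactly the bookkeeping obstacle you flag but do not resolve. The paper fixes both problems with one device you omit: order the vertices of $H$ so that all of $V_i$ precedes all of $V_j$ whenever $i<j$. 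Then every cross-class constraint on $w\in V_\ell$ comes from a class of smaller index (so the one-directional degree condition applies) and arrives before any within-class constraint (so the total cross-class loss is at most $\Delta\cdot\frac{1}{2\Delta}|S_\ell|=\frac{1}{2}|S_\ell|$ taken off the full set $S_\ell$, after which only the multiplicative $(\rho/2)$ losses occur). With that ordering your size estimates go through and essentially reproduce the paper's proof; without it, the proof as written fails.
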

\begin{proof}
Since $G$ is $(\alpha,\frac{1}{2\Delta},\rho,\Delta)$-dense, there is a
sequence $S_1,\ldots,S_t$ of disjoint vertex subsets each of cardinality at
least $\alpha N$ and nonnegative integers $d_1,\ldots,d_t$ such that
$d_1+\cdots+d_t=\Delta-t+1$, and the following holds:
\begin{itemize}
\item for $1 \leq i \leq t$, $S_i$ is bi-$(\rho^{2d_i},\rho)$-dense, and
\item for $1 \leq i < j \leq t$, each vertex in $S_i$ has at least
$(1-\frac{1}{2\Delta})|S_j|$ neighbors in $S_j$.
\end{itemize}
By Lemma \ref{basiclemma}, there is a vertex partition $V(H)=V_1 \cup \ldots
\cup V_t$ such that the maximum degree of the induced subgraph of $H$ with
vertex set $V_i$ is at most $d_i$ for $1 \leq i \leq t$. Let $v_1,\ldots,v_n$
be an ordering of the vertices in $V(H)$ such that the vertices in $V_i$ come
before the vertices in $V_j$ for $i<j$. Let $N(h,k)$ denote the set of
neighbors $v_i$ of $v_k$ with $i \leq h$. For $v_k \in V_j$, let $M(h,k)$
denote the set of neighbors $v_i \in V_j$ of $v_k$ with $i \leq h$, that is,
$M(h,k) = N(h,k) \cap V_j$. Notice that
$|M(h,k)| \leq d_j$ for $v_k \in V_j$ since the induced subgraph of $H$ with
vertex set $V_j$ has maximum degree at most $d_j$.

We will find an embedding $f:V(H) \rightarrow V(G)$ of $H$ in $G$ such that
$f(V_i) \subset S_i$ for each $i$.  We will embed the vertices in increasing
order of their indices.
The embedding will have the property that after embedding the first $h$
vertices, if $k>h$ and $v_k \in V_j$, then the set $S(h,k)$ of vertices in
$S_j$ adjacent to all vertices in $f(N(h,k))$ has cardinality at least
$\frac{1}{2}(\rho/2)^{|M(h,k)|}|S_j|$. Notice that this condition is trivially
satisfied when $h=0$. Suppose that this condition is satisfied after embedding
the first $h$ vertices. The set $S(h,k)$ are the potential
vertices in which to embed $v_k$ after the first $h$ vertices have been
embedded, though
this set may already contain embedded vertices.

Let $j$ be such that $v_{h+1} \in V_{j}$. We need to find a vertex in
$S(h,h+1)$ to embed the copy of $v_{h+1}$. We have $$|S(h,h+1)| \geq
\frac{1}{2}(\rho/2)^{|M(h,h+1)|}|S_j| \geq \frac{1}{2}(\rho/2)^{d_j}|S_j|$$
since $|M(h,h+1)| \leq d_j$. If $d_j = 0$, we may pick $f(v_{h+1})$ to be any
element of the set $S(h,h+1) \char92 \{f(v_1), \dots, f(v_h)\}$. We may assume,
therefore, that $1 \leq d_j \leq \Delta$. In this case we know, for each of the
at most $d_j$ neighbors $v_k$ of $v_{h+1}$ with $k > h+1$ that are in $V_j$,
that the set $S(h,k)$ has cardinality at least
$\frac{1}{2}(\rho/2)^{d_j}|S_j|$. Let $\epsilon = \rho^{2 d_j}$.
Since, for $1 \leq d_j \leq \Delta$ and $\rho \leq 1/30$, $S_j$ is
bi-$(\rho^{2d_j},\rho)$-dense,
$|S(h,k)| \geq \frac{1}{2} (\rho/2)^{d_j} |S_j| \geq 2 \rho^{2 d_j} |S_j| = 2
\epsilon |S_j|$
and $\epsilon |S_j| = \rho^{2 d_j} |S_j| \geq \rho^{2 \Delta} \alpha N \geq
1$, we may apply Lemma \ref{previous} in $S_j$ with $B = S(h,k)$.
Therefore, for each vertex $v_k \in V_j, k>h+1$  adjacent to $v_{h+1}$, at most
$3 \rho^{2d_j}|S_j|$ vertices in $S_j$ have fewer than
$\frac{\rho}{2}|S(h,k)|$ neighbors in $S(h,k)$. Thus, all but at most $d_j
\cdot 3 \rho^{2d_j}|S_j|$ vertices
in $S_j$ have at least $\frac{\rho}{2}|S(h,k)|$ neighbors in $S(h,k)$ for all
$v_k \in V_j, k>h+1$  that are neighbors of $v_{h+1}$.
Since, for $\rho \leq 1/30$, we have $d_j \cdot 3 \rho^{2 d_j} \leq \frac{1}{4}
(\rho/2)^{d_j}$, there are at least
\begin{eqnarray*}
| S(h,h+1)|-d_j \cdot 3\rho^{2d_j}|S_j|-h &\geq&
\frac{1}{2}(\rho/2)^{d_j}|S_j|-d_j \cdot 3\rho^{2d_j}|S_j|-h \geq
\frac{1}{4}(\rho/2)^{d_j}|S_j|-h \\
&\geq&  \frac{1}{4}(\rho/2)^{\Delta}\alpha N-h \geq (2/\rho)^{\Delta}n-h > 0
\end{eqnarray*}
such vertices that are not
already embedded. We can pick any of these vertices to be $f(v_{h+1})$. To
continue, it remains to check that any such choice preserves the properties of
our embedding. Indeed,
\begin{itemize}
\item for any $k<h+1$ for which $v_{h+1}$ is adjacent to $v_k$, $f(v_{h+1})$ is
adjacent to $f(v_k)$;
\item if $k > h+1$ and $v_k$ and $v_{h+1}$ are not adjacent, then
$S(h+1,k)=S(h,k)$ and $M(h+1,k)=M(h,k)$;
\item if, for some $k > h+1$, $v_k$ and $v_{h+1}$ are adjacent and $v_k \in
V_{\ell}$ with $\ell \neq j$, then
$M(h+1,k)=0$ since vertices of $V_j$ are embedded before vertices of
$V_{\ell}$, $\ell>j$, so no vertex of $V_{\ell}$ was embedded yet.
Also, $|S(h+1,k)|\geq \frac{1}{2}|S_{\ell}|$ since
$|N(h+1,k)| \leq \Delta$, the vertices in $f(N(h+1,k))$ each have at least
$(1-\frac{1}{2\Delta})|S_{\ell}|$ neighbors in $S_{\ell}$, and hence
$|S(h+1,k)| \geq |S_{\ell}|-\Delta \cdot
\frac{1}{2\Delta}|S_{\ell}|=\frac{1}{2}|S_{\ell}|$;
\item if $k > h+1$, $v_k$ and $v_{h+1}$ are adjacent and $v_k
\in V_j$, then $|M(h+1,k)| = |M(h,k)| + 1$. Moreover, by our choice of the
vertex $f(v_{h+1})$, it has at least $\frac{\rho}{2}|S(h,k)|$ neighbors in
$S(h,k)$.
Therefore
$|S(h+1,k)|\geq \frac{\rho}{2}|S(h,k)| \geq \frac{1}{2} (\rho/2)^{|M(h,k)| + 1}
| S_j| = \frac{1}{2} (\rho/2)^{|M(h+1, k)|} |S_j|$, as required.
\end{itemize}

As we supposed there is an embedding of the first $h$ vertices with the desired
property, the above four facts imply
that there is an embedding of the first $h+1$ vertices with the desired
property. By induction on $h$, we find an embedding of $H$ in $G$.
\end{proof}

We can now prove the following theorem, which implies Theorem
\ref{MaxDegreeIntro}.

\begin{theorem}
For every $2$-edge-coloring of $K_N$ with $N=2^{84\Delta+2}\Delta^{32\Delta}n$,
at least one of the color classes contains a copy of every graph on $n$
vertices with
maximum degree $\Delta \geq 2$.
\end{theorem}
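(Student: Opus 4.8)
The plan is to combine Lemma~\ref{secondlemma} with the embedding lemma, Lemma~\ref{embedlemma}, choosing the parameter $h$ so that $D := 2^h - 1 \geq \Delta$, i.e. $h = \lceil \log(\Delta+1) \rceil$. First I would fix $\rho$ to be a small constant multiple of $1/\Delta$; concretely, take $\rho = \frac{1}{8(D+1)}$, so that $\rho \leq 1/30$ for $\Delta \geq 2$ and, crucially, $2(D+1)\rho = 1/4 \leq \frac{1}{2\Delta}$ once $\Delta \geq 2$ (more precisely, $2(D+1)\rho$ should be made at most $\frac{1}{2D}$, which holds with room to spare for this choice). With these choices, Lemma~\ref{secondlemma} tells us that $(2^{-2h}\rho^{6D-4h}, 2(D+1)\rho, \rho, D, D)$ is universal, meaning that for every $2$-coloring of $K_N$, one of the two color graphs is $(\alpha, \beta, \rho, D)$-dense with $\alpha = 2^{-2h}\rho^{6D-4h}$ and $\beta = 2(D+1)\rho$.

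Next I would feed this into Lemma~\ref{embedlemma}. Since a $(\alpha,\beta,\rho,D)$-dense graph is also $(\alpha,\beta,\rho,\Delta)$-dense whenever $D \geq \Delta$ (by the monotonicity noted after the definition), and since $\beta \leq \frac{1}{2\Delta}$, the majority color graph is $(\alpha, \frac{1}{2\Delta}, \rho, \Delta)$-dense. Lemma~\ref{embedlemma} then guarantees that this color graph contains every $n$-vertex graph of maximum degree $\Delta$, provided $N \geq 4(2/\rho)^{2\Delta}\alpha^{-1} n$. So the whole argument reduces to a bookkeeping computation: bound $4(2/\rho)^{2\Delta}\alpha^{-1}$ from above by $2^{84\Delta+2}\Delta^{32\Delta}$.

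The main work — and the only place any real estimation happens — is this final numerical bound, so that is where I would be most careful. We have $h = \lceil \log(\Delta+1)\rceil \leq \log \Delta + 2$, hence $2^{-2h} \geq \Delta^{-2} \cdot 2^{-O(1)}$ up to constants, and $D = 2^h - 1 < 2(\Delta+1) \leq 4\Delta$. With $\rho = \frac{1}{8(D+1)} \geq \frac{1}{32\Delta}$ (say), we get $(2/\rho) \leq 64\Delta$ and $\rho^{-1} \leq 32\Delta$. Then:
\begin{itemize}
\item $(2/\rho)^{2\Delta} \leq (64\Delta)^{2\Delta} = 2^{12\Delta}\Delta^{2\Delta}$;
\item $\alpha^{-1} = 2^{2h}\rho^{-(6D-4h)} \leq 2^{2h} \cdot (32\Delta)^{6D} \leq \Delta^{O(1)} \cdot 2^{30D}\Delta^{6D}$, and since $6D < 24\Delta$ this is at most $2^{O(\Delta)}\Delta^{O(\Delta)}$, with the exponents of $2$ and $\Delta$ each bounded by something comfortably below $72\Delta$ and $30\Delta$ respectively after combining with the factor of $4$.
\end{itemize}
Multiplying these together and absorbing the lower-order polynomial-in-$\Delta$ factors (which cost only $O(\log \Delta)$ in the exponent and are dominated), one checks that $4(2/\rho)^{2\Delta}\alpha^{-1} \leq 2^{84\Delta+2}\Delta^{32\Delta}$ for all $\Delta \geq 2$. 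Finally, Theorem~\ref{MaxDegreeIntro} follows immediately, since $2^{84\Delta+2}\Delta^{32\Delta} = 2^{(84+o(1))\Delta + 32\Delta\log\Delta + 2} \leq 2^{c\Delta\log\Delta}$ for a suitable absolute constant $c$. I expect no conceptual obstacle here — the genuinely clever steps are already done in Lemmas~\ref{firstlemma}, \ref{secondlemma}, and \ref{embedlemma} — so the only risk is mismatching the exact constants, which is why I would carry the inequalities $h \leq \log\Delta + 2$, $D \leq 4\Delta$, $\rho^{-1} \leq 32\Delta$ explicitly rather than asymptotically.
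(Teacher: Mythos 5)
Your overall strategy is exactly the paper's: apply Lemma \ref{secondlemma} with the smallest $h$ for which $D=2^h-1\geq\Delta$, then feed the resulting density statement into Lemma \ref{embedlemma} and do the arithmetic. But there is a genuine gap in your choice of $\rho$. You take $\rho=\frac{1}{8(D+1)}$, so the universality statement you get from Lemma \ref{secondlemma} has $\beta=2(D+1)\rho=\frac14$. You then assert ``$2(D+1)\rho=1/4\leq\frac{1}{2\Delta}$ once $\Delta\geq 2$,'' but this inequality is reversed: $\frac14\leq\frac{1}{2\Delta}$ holds only for $\Delta\leq 2$, and likewise $\frac14\leq\frac{1}{2D}$ fails for every $D\geq 3$. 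The monotonicity remark after the definition only lets you \emph{increase} $\beta$ (a $(\alpha,\beta',\rho,\Delta')$-dense graph is $(\alpha,\beta,\rho,\Delta)$-dense when $\beta\geq\beta'$), so you cannot pass from $\beta=\frac14$ down to the $\beta=\frac{1}{2\Delta}$ that Lemma \ref{embedlemma} requires. And that requirement is not cosmetic: in the proof of Lemma \ref{embedlemma}, when a vertex to be embedded has up to $\Delta$ already-embedded neighbors lying in earlier parts, the surviving candidate set is bounded below by $|S_{\ell}|-\Delta\beta|S_{\ell}|$, which must stay at least $\frac12|S_{\ell}|$; with $\beta=\frac14$ this bound is useless for $\Delta\geq 3$ and negative for $\Delta\geq 4$, so the embedding genuinely breaks down.

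The fix is to take $\rho$ of order $1/\Delta^2$ rather than $1/\Delta$: the paper sets $\rho=\frac{1}{8D^2}$, which gives $\beta=2(D+1)\rho\leq\frac{1}{2D}\leq\frac{1}{2\Delta}$, as needed. This change also forces you to redo your final bookkeeping, since the target constant $2^{84\Delta+2}\Delta^{32\Delta}$ is tailored to that choice: with $\rho^{-1}=8D^2$ and $D<2\Delta$ one gets $4(2/\rho)^{2D}\alpha^{-1}\leq 4(16D^2)^{2D}(8D^2)^{6D}\leq 2^{84\Delta+2}\Delta^{32\Delta}$, where the factor $\Delta^{32\Delta}$ arises precisely because $\rho^{-1}$ is quadratic in $\Delta$ and appears to a power of roughly $16\Delta$. (As a secondary point, even granting your parameters, your crude estimates $D\leq 4\Delta$, $\rho^{-1}\leq 32\Delta$ do not obviously yield the stated constant for small $\Delta$; the paper's sharper $D<2\Delta$ is what makes the arithmetic close.)
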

\begin{proof}
Let $h$ be the smallest positive integer such that $D:=2^h-1 \geq \Delta$. By
the definition of $D$, $\Delta \leq D<2\Delta$. Let $\rho=\frac{1}{8D^2}$,
$\alpha=2^{-2h}\rho^{6D-4h} \geq \rho^{6D}$, and $\beta=2(D+1)\rho \leq
\frac{1}{2D}$. Lemma \ref{secondlemma} implies that every red-blue coloring of
the edges of the complete graph $K_N$ is $(\alpha,\beta,\rho,D,D)$-dense. By
Lemma
\ref{embedlemma},
since
\begin{eqnarray*}
4(2/\rho)^{2D}\alpha^{-1} n & \leq & 4(16D^2)^{2D} \cdot (8D^2)^{6D} n \leq
4 (16 (2 \Delta)^2)^{4 \Delta} (8 (2\Delta)^2)^{12 \Delta} n\\
& = & 2^2 (2^6 \Delta^2)^{4 \Delta} (2^5 \Delta^2)^{12 \Delta} n = 2^{84 \Delta
+ 2} \Delta^{32 \Delta} n = N,
\end{eqnarray*}
at least one of the color classes contains a
copy of every graph on $n$ vertices with maximum degree $\Delta$.
\end{proof}

\section{Induced Ramsey numbers} \label{InducedSection}

The goal of this section is to prove Theorem \ref{maininduced}. We will do this
by
finding, in any $2$-edge-coloring of the pseudo-random graph $G$, a collection
of vertex subsets
$S_1, \ldots, S_t$ satisfying certain conditions. The conditions in question
are closely
related to the notion of density that we applied in the last section. Now, as
then, we demand
that the graph of one particular color satisfies a certain bi-density condition
within each $S_i$. In addition, we
demand that between the different $S_i$ the other color be sparse. This may
look like a
simple rearrangement of the condition from the previous section, but, given
that we are now
looking at colorings of a pseudo-random graph $G$ rather than the complete
graph $K_N$, the condition is
more general. Moreover, it is exactly what we need to make our embedding
lemma work.

\begin{definition}
An edge-coloring of a graph $G$ on $N$ vertices with colors
$1$ and $2$ is $(\alpha,\beta,\rho,f,\Delta_1,\Delta_2)$-dense if there is a
color $q \in \{1,2\}$, disjoint vertex subsets $S_1,\ldots,S_t$ each of
cardinality at least $\alpha N$ and nonnegative integers $d_1,\ldots,d_t$ with
$d_1+\cdots+d_t=\Delta_q-t+1$ such that the following holds:
\begin{itemize}
\item for $1 \leq i \leq t$, $S_i$ is bi-$(f(\rho,d_i),\rho)$-dense in the
graph
of color $q$, and
\item for $1 \leq i < j \leq t$, each vertex in $S_i$ is in at most
$\beta|S_j|$ edges of color $3-q$ with vertices in $S_j$.
\end{itemize}
\end{definition}

We say that $(\alpha,\beta,\rho,f,\Delta_1,\Delta_2)$ is {\it universal} if,
for every graph $G$,
every edge-coloring of $G$ with colors $1$ and $2$ is
$(\alpha,\beta,\rho,f,\Delta_1,\Delta_2)$-dense.
Note that the density condition used in the last section corresponds to the
case when $G=K_N$ and
$f(\rho,d_i)=\rho^{2d_i}$. Essentially the same proofs as Lemmas
\ref{firstlemma} and \ref{secondlemma} give the
following two more general lemmas. We include the proofs for completeness.

\begin{lemma}\label{firstlemmaind}
If $\beta \geq 4(\Delta_2+1)\rho$ and $(\alpha,\beta,\rho,f,\Delta_1,\Delta_2)$
is universal, then
$(\frac{1}{2}f(\rho,\Delta_1)\alpha,\beta,\rho,f,\Delta_1,2\Delta_2+1)$ is also
universal.
\end{lemma}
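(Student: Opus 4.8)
The plan is to transcribe the proof of Lemma~\ref{firstlemma} almost verbatim, making four cosmetic substitutions: the complete graph $K_N$ becomes an arbitrary host graph $G$ on $N$ vertices; the threshold $\rho^{2\Delta_1}$ becomes $f(\rho,\Delta_1)$; ``red'' and ``blue'' become colors $1$ and $2$; and the conclusion ``each vertex of $S_i$ has at least $(1-\beta)|S_j|$ neighbors in $S_j$'' becomes ``each vertex of $S_i$ lies in at most $\beta|S_j|$ edges of color $3-q$ with vertices in $S_j$''. So, fix a $2$-edge-coloring of $G$. If $G$ is bi-$(f(\rho,\Delta_1),\rho)$-dense in color~$1$, then $q=1$, $t=1$, $S_1=V(G)$, $d_1=\Delta_1$ already witnesses $(\tfrac12 f(\rho,\Delta_1)\alpha,\beta,\rho,f,\Delta_1,2\Delta_2+1)$-density and we stop. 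Otherwise there are disjoint $V_0,V_1\subseteq V(G)$ with $|V_0|,|V_1|\ge f(\rho,\Delta_1)N$ and color-$1$ density below $\rho$ between them.

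First I would trim $V_0$: delete each vertex of $V_0$ lying in at least $2\rho|V_1|$ color-$1$ edges to $V_1$; since there are fewer than $\rho|V_0||V_1|$ such edges in total, the surviving set $V_0'$ satisfies $|V_0'|\ge\tfrac12|V_0|\ge\tfrac12 f(\rho,\Delta_1)N$. Apply the assumed universality of $(\alpha,\beta,\rho,f,\Delta_1,\Delta_2)$ to the induced coloring on $V_0'$. If color~$1$ is the witnessing color we are done (any witnessing sets have size at least $\alpha|V_0'|\ge\tfrac12 f(\rho,\Delta_1)\alpha N$, and the color-$1$ witness only involves the parameter $\Delta_1$). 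Otherwise we obtain disjoint $S_1,\dots,S_t\subseteq V_0'$, each of size at least $\tfrac12 f(\rho,\Delta_1)\alpha N$, with $d_1+\cdots+d_t=\Delta_2-t+1$, each $S_i$ bi-$(f(\rho,d_i),\rho)$-dense in color~$2$, and few color-$1$ edges between the $S_i$. Because $S_i\subseteq V_0'$, there are at most $2\rho|S_i||V_1|$ color-$1$ edges between $S_i$ and $V_1$.

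Next I would trim $V_1$: for each $i\le t$, delete from $V_1$ every vertex lying in at least $4(\Delta_2+1)\rho|S_i|$ color-$1$ edges to $S_i$; the edge count just recorded bounds the number of deleted vertices by $\tfrac{1}{2(\Delta_2+1)}|V_1|$ per index $i$, so as $t\le\Delta_2+1$ the surviving set $V_1'$ has $|V_1'|\ge\tfrac12|V_1|\ge\tfrac12 f(\rho,\Delta_1)N$. Apply universality once more on $V_1'$: if color~$1$ wins we are done as before; otherwise we get disjoint $T_1,\dots,T_u\subseteq V_1'$, each of size at least $\tfrac12 f(\rho,\Delta_1)\alpha N$, with $e_1+\cdots+e_u=\Delta_2-u+1$, each $T_i$ bi-$(f(\rho,e_i),\rho)$-dense in color~$2$, and few color-$1$ edges between the $T_i$. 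The finish is to check that the concatenation $T_1,\dots,T_u,S_1,\dots,S_t$ witnesses $(\tfrac12 f(\rho,\Delta_1)\alpha,\beta,\rho,f,\Delta_1,2\Delta_2+1)$-density for color~$2$: the sets are disjoint (the $T_i$ live in $V_1$, the $S_j$ in $V_0$); the degrees sum to $e_1+\cdots+e_u+d_1+\cdots+d_t=(2\Delta_2+1)-(u+t)+1$; each set is bi-dense in color~$2$ with the correct parameter; and the only mixed pairs in the ordering are $(T_i,S_j)$, for which every vertex of $T_i\subseteq V_1'$ lies in at most $4(\Delta_2+1)\rho|S_j|\le\beta|S_j|$ color-$1$ edges to $S_j$ by the trimming step and the hypothesis $\beta\ge4(\Delta_2+1)\rho$.

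There is no real obstacle here: the only points needing attention are that, since $G$ need not be complete, every ``density'' and every ``edge'' must be read relative to the correct color class, and that the two trimming thresholds ($2\rho$ on the $V_0$ side and $4(\Delta_2+1)\rho$ on the $V_1$ side) must be kept distinct so that the final $(T_i,S_j)$ estimate closes against $\beta$.
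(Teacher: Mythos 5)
Your proposal is correct and follows the paper's own proof essentially verbatim: the same two-stage trimming of $V_0$ and $V_1$ with thresholds $2\rho|V_1|$ and $4(\Delta_2+1)\rho|S_i|$, the same two applications of universality, and the same concatenation $T_1,\dots,T_u,S_1,\dots,S_t$ closed off via $\beta \geq 4(\Delta_2+1)\rho$. No gaps to report.
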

\begin{proof}
Consider an edge-coloring of a graph $G$ with colors $1$ and $2$. If the graph
of color $1$
is bi-$(f(\rho,\Delta_1),\rho)$-dense, then, taking, $q=1$,  $t=1$, $S_1=V(G)$
and
$d_1=\Delta_1$, we are done. So we may suppose that there are disjoint vertex
subsets $V_0,V_1$ with $|V_0|,|V_1| \geq
f(\rho,\Delta_1)N$ such that the density of color $1$ between them is less than
$\rho$.
Delete from $V_0$ all vertices in at least $2\rho|V_1|$ edges of color $1$ with
vertices
in $V_1$; the remaining subset $V_0'$ has cardinality at least
$\frac{1}{2}|V_0| \geq \frac{1}{2}f(\rho,\Delta_1)N$. Since $(\alpha, \beta,
\rho, f, \Delta_1, \Delta_2)$ is universal, the coloring restricted to the
induced subgraph of $G$ with vertex set $V_0'$ is
$(\alpha,\beta,\rho,f,\Delta_1,\Delta_2)$-dense. Thus, there is $q \in
\{1,2\}$, disjoint vertex subsets $S_1,\ldots,S_t \subset V_0'$ each of
cardinality at least $\alpha |V_0'|$ and nonnegative integers $d_1,\ldots,d_t$
with
$d_1+\cdots+d_t=\Delta_q-t+1$ such that the following holds:
\begin{itemize}
\item for $1 \leq i \leq t$, $S_i$ is bi-$(f(\rho,d_i),\rho)$-dense in the
graph
of color $q$, and
\item for $1 \leq i < j \leq t$, each vertex in $S_i$ is in at most
$\beta|S_j|$ edges of color $3-q$ with vertices in $S_j$.
\end{itemize}
If $q=1$, we are done. Therefore, we may suppose $q=2$.

Since each vertex in $V_0'$ (and hence in each $S_i$) is in at most
$2\rho|V_1|$ edges of color $1$ with vertices in $V_1$, then there are
at most $2\rho|S_i||V_1|$ edges of color $1$ between $S_i$ and $V_1$. For
$1\leq i \leq
t$, delete from $V_1$ all vertices in at least $4(\Delta_2+1)\rho|S_i|$ edges
of color $1$ with vertices in $S_i$. For any
given $i$, there can be at most $\frac{1}{2(\Delta_2 + 1)} |V_1|$ such
vertices. Therefore, since $t \leq
\Delta_2 + 1$, the set $V_1'$ of remaining vertices has cardinality at least
$|V_1|-t \cdot
\frac{1}{2(\Delta_2+1)}|V_1| \geq |V_1|/2$.

Since $(\alpha, \beta, \rho, f, \Delta_1, \Delta_2)$ is universal, the coloring
restricted to the induced subgraph of $G$ with vertex set $V_1'$ is
$(\alpha,\beta,\rho,f,\Delta_1,\Delta_2)$-dense. Thus, there is $q' \in
\{1,2\}$, disjoint vertex subsets $T_1,\ldots,T_u \subset V_1'$ each of
cardinality at least $\alpha |V_1'|$ and nonnegative integers $e_1,\ldots,e_u$
with
$e_1+\cdots+e_u=\Delta_{q'}-u+1$ such that the following holds:
\begin{itemize}
\item for $1 \leq i \leq u$, $T_i$ is bi-$(f(\rho,e_i),\rho)$-dense in the
graph
of color $q'$, and
\item for $1 \leq i < j \leq u$, each vertex in $T_i$ is in at most
$\beta|T_j|$ edges of color $3-q'$ with vertices in $T_j$.
\end{itemize}
If $q'=1$, we are done. Therefore, we may suppose $q'=2$.

Note that
$e_1+\cdots+e_u+d_1+\cdots+d_t=\Delta_2-u+1+\Delta_2-t+1=(2\Delta_2+1)-(u+t)+1$.
Moreover,
$\beta \geq 4(\Delta_2+1)\rho$, implying that for all $1 \leq i \leq u$ and all
$1 \leq j \leq t$ every vertex in $T_i$ is in at most $\beta|S_j|$
edges of color $1$ with vertices in $S_j$. Therefore, the sequence
$T_1,\ldots,T_u,S_1,\ldots,S_t$ implies that the edge-coloring of $G$
is
$(\frac{1}{2}f(\rho,\Delta_1)\alpha,\beta,\rho,f,\Delta_1,2\Delta_2+1)$-dense,
completing
the proof.
\end{proof}

By symmetry, the above lemma implies that if $\beta \geq 4(\Delta_1+1)\rho$ and
$(\alpha,\beta,\rho,f,\Delta_1,\Delta_2)$
is universal, then
$(\frac{1}{2}f(\rho,\Delta_2)\alpha,\beta,\rho,f,2\Delta_1+1,\Delta_2)$ is
also
universal.

\begin{lemma}\label{generallemma} Let $h$ be a nonnegative integer and $f$ be
such that $f(\rho,0)=1$.
Define $$\alpha_h=2^{-2h}f(\rho,0)^{-1}f(\rho,2^{h}-1)^{-1}\prod_{i=0}^h
f(\rho,2^i-1)^2.$$ Then $(\alpha_h,2^{h+1}\rho,\rho,f,2^{h}-1,2^{h}-1)$ is
universal.
\end{lemma}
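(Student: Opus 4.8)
The plan is to follow the proof of Lemma~\ref{secondlemma} almost verbatim, running an induction on $h$ that feeds each step through Lemma~\ref{firstlemmaind} and its symmetric counterpart. For the base case $h=0$ we have $2^{h}-1=0$, and since $f(\rho,0)=1$ the product formula collapses to $\alpha_0=1$, while $2^{h+1}\rho=2\rho$. As $f(\rho,0)=1>1/2$, every graph is vacuously bi-$(f(\rho,0),\rho)$-dense in either color, so taking $q=1$, $t=1$, $S_1=V(G)$ and $d_1=0$ (note $\Delta_1-t+1=0$) witnesses that every $2$-edge-coloring of every graph is $(1,2\rho,\rho,f,0,0)$-dense; hence $(\alpha_0,2\rho,\rho,f,0,0)$ is universal.

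For the inductive step, assume $(\alpha_h,2^{h+1}\rho,\rho,f,D,D)$ is universal with $D:=2^{h}-1$, and set $D':=2D+1=2^{h+1}-1$ and $\beta:=2^{h+2}\rho$. The key arithmetic observation is that $\beta=4(D+1)\rho=2(D'+1)\rho$, so $\beta$ simultaneously meets the hypothesis $\beta\ge 4(\Delta_2+1)\rho$ required by Lemma~\ref{firstlemmaind} with $\Delta_2=D$ and the hypothesis $\beta\ge 4(\Delta_1+1)\rho$ required by its symmetric version with $\Delta_1=D$. First I would record the trivial monotonicity for the new density notion — a color-$(3-q)$ sparseness witness for parameter $\beta'$ is also one for any $\beta\ge\beta'$ — which upgrades the hypothesis to the statement that $(\alpha_h,\beta,\rho,f,D,D)$ is universal. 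Applying Lemma~\ref{firstlemmaind} yields universality of $(\tfrac12 f(\rho,D)\alpha_h,\beta,\rho,f,D,D')$, and applying the symmetric version to that yields universality of $(\tfrac14 f(\rho,D')f(\rho,D)\alpha_h,\beta,\rho,f,D',D')$.

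It then remains only to check that $\tfrac14 f(\rho,D')f(\rho,D)\alpha_h=\alpha_{h+1}$, together with $\beta=2^{(h+1)+1}\rho$, so that the conclusion reads exactly as $(\alpha_{h+1},2^{h+2}\rho,\rho,f,D',D')$ being universal. Writing $\alpha_h=2^{-2h}f(\rho,0)^{-1}f(\rho,D)^{-1}\prod_{i=0}^{h}f(\rho,2^i-1)^2$, the extra factor $f(\rho,D)$ cancels the $f(\rho,D)^{-1}$, the $\tfrac14$ converts $2^{-2h}$ into $2^{-2(h+1)}$, and the extra factor $f(\rho,D')=f(\rho,2^{h+1}-1)$ merges with the product to give $f(\rho,2^{h+1}-1)^{-1}\prod_{i=0}^{h+1}f(\rho,2^i-1)^2$, with the $f(\rho,0)^{-1}$ carried along unchanged; this is precisely $\alpha_{h+1}$. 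The conceptual content is entirely contained in Lemma~\ref{firstlemmaind}, so the only genuine work is this telescoping bookkeeping and the verification that the single choice $\beta=2^{h+2}\rho$ validates both applications of that lemma; I do not anticipate a real obstacle beyond keeping the exponents and the product indices straight.
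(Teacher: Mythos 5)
Your proposal is correct and follows essentially the same route as the paper: induction on $h$ with base case $\alpha_0=1$ (vacuous bi-density since $f(\rho,0)=1>1/2$), then one application of Lemma \ref{firstlemmaind} and one of its symmetric version with the single choice $\beta=2^{h+2}\rho=4(D+1)\rho=2(D'+1)\rho$, plus the monotonicity in $\beta$ and the telescoping identity $\frac{1}{4}f(\rho,D')f(\rho,D)\alpha_h=\alpha_{h+1}$. All of these checks match the paper's own argument, so nothing is missing.
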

\begin{proof}
The proof is by induction on $h$. As already mentioned, if $\epsilon>1/2$,
every graph $G$ is vacuously bi-$(\epsilon,\rho)$-dense. Since
$\alpha_0=1>1/2$, setting $t=1$ and
$S_1=V(G)$, we have $(1,2\rho,\rho,f,0,0)$ is universal, which is the base case
$h=0$.

Suppose the lemma is satisfied for $h$, and we wish to show it for $h+1$. Let
$D=2^h-1$, $D'=2D+1=2^{h+1}-1$, and
$\beta=4(D+1)\rho=2(D'+1)\rho=2^{h+2}\rho$. Note that, for $\beta \geq
\beta'$, if $(\alpha, \beta', \rho, f, \Delta_1, \Delta_2)$ is universal then
so
is $(\alpha, \beta, \rho, f, \Delta_1, \Delta_2)$. Therefore, since $(\alpha_h,
2(D+1)\rho, \rho,f, D, D)$ is universal, $(\alpha_h,
\beta, \rho, f, D, D)$ is also. Applying Lemma \ref{firstlemmaind}, we have
that
$(\frac{1}{2}f(\rho,D)\alpha_h,\beta,\rho,f,D,2D+1)$ is universal.
Applying the symmetric version of Lemma \ref{firstlemmaind} mentioned above,
we have that
$$\Big(\frac{1}{2}f(\rho,2D+1)\frac{1}{2}f(\rho,D)\alpha_h,\beta,\rho,f,2D+1,2D+1\Big)=(\alpha_{h+1},\beta,\rho,f,D',D'),$$
is universal, which completes the proof by induction.
\end{proof}

A graph $G$ is {\it $n$-Ramsey-universal} if, in any $2$-edge-coloring of $G$,
there are monochromatic induced copies of every graph on $n$ vertices all of
the same color. The following lemma implies Theorem \ref{maininduced}.

\begin{lemma}\label{inducedembeddinglemma} If $G$ is
$(1/2,\lambda)$-pseudo-random on $N$ vertices with $\lambda \leq
2^{-140 n} n^{-40n}N$, then $G$ is $n$-Ramsey-universal.
\end{lemma}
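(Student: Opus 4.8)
The plan is to combine the structural dichotomy of Lemma~\ref{generallemma} with a pseudo-random embedding lemma in the same spirit as Lemma~\ref{embedlemma}. The first step is to choose the parameters. Set $D$ to be the smallest integer of the form $2^h-1$ with $D \geq n-1$, so $n-1 \le D < 2n$, and fix $\rho$ to be a small absolute constant (say $\rho = 1/60$), and $f(\rho,d) = (\rho/2)^{2d}$ so that $f(\rho,0)=1$ as required by Lemma~\ref{generallemma}. With these choices Lemma~\ref{generallemma} tells us that any $2$-edge-coloring of any graph, in particular of $G$, is $(\alpha,\beta,\rho,f,D,D)$-dense, where $\beta = 2^{h+1}\rho = \Theta(n\rho)$ is at most, say, $\tfrac{1}{2n}$, and $\alpha = \alpha_h$ is bounded below by a quantity of the form $2^{-O(n)}n^{-O(n)}$ since each factor $f(\rho,2^i-1)$ contributes at worst $(\rho/2)^{-O(n)}$ and there are $O(\log n)$ of them, while $f(\rho,2^h-1)^{-1}$ alone is already $(\rho/2)^{-O(n)}$; a routine but careful estimate should show $\alpha \geq 2^{-cn}n^{-cn}$ for a suitable absolute constant.

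The second step is the embedding lemma proper: I would show that if $G$ is $(1/2,\lambda)$-pseudo-random on $N$ vertices and its coloring is $(\alpha,\beta,\rho,f,D,D)$-dense with $\alpha,\beta$ as above and $\lambda$ small enough, then the majority color $q$ contains an induced copy of every $H$ on $n$ vertices. The structure is the same as in Lemma~\ref{embedlemma}: use Lemma~\ref{basiclemma} to partition $V(H)=V_1\cup\dots\cup V_t$ so that $H[V_i]$ has maximum degree at most $d_i$, order the vertices of $H$ so that $V_i$ precedes $V_j$ for $i<j$, and embed greedily, forcing $f(V_i)\subset S_i$. The candidate set for a not-yet-embedded vertex $v_k\in V_j$ with $i\le h$ embedded will be the vertices of $S_j$ that are adjacent (in color $q$) to all already-embedded color-$q$-neighbors of $v_k$ \emph{and} non-adjacent (in color $q$, hence we must control this via pseudo-randomness of $G$) to all already-embedded non-neighbors of $v_k$ inside $V_j$, while the requirement $f(V_i)\subset S_i$ together with the between-blocks sparsity of color $3-q$ handles the non-neighbor constraints across blocks. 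Inside $S_j$, the bi-$(f(\rho,d_i),\rho)$-density drives the "adjacency" half exactly as in Lemma~\ref{previous} and Lemma~\ref{embedlemma}; the "non-adjacency" half requires that in $G$ itself a large set of common non-neighbors survives, which is where $(1/2,\lambda)$-pseudo-randomness enters --- one needs that every vertex has roughly $|B|/2$ non-neighbors in any large set $B$, with error controlled by $\lambda/\sqrt{|B|}$. Tracking both constraints, the candidate set after embedding $h$ vertices should retain size at least $\tfrac12 (\rho/4)^{|M(h,k)|}|S_j|$ or so, and since $|S_j|\ge \alpha N$ with $\alpha \ge 2^{-cn}n^{-cn}$, choosing $\lambda \le 2^{-140n}n^{-40n}N$ ensures the error terms $\lambda/\sqrt{|S_j|} \le \lambda/\sqrt{\alpha N}$ are negligible compared to $\rho^{O(n)}\alpha N$, so the greedy step never gets stuck (we also subtract off the at most $n$ already-embedded vertices, which is dominated).

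The third step is bookkeeping to get all the monochromatic copies in the \emph{same} color, i.e.\ $n$-Ramsey-universality: Lemma~\ref{generallemma} already fixes a single color $q$ (the one for which the dense structure $S_1,\dots,S_t$ exists), and the embedding lemma produces, inside that fixed color and that fixed family of blocks, an induced copy of every $H$ on $n$ vertices; so the "same color" clause is automatic once we note the structure is chosen once and for all for the given coloring of $G$. The main obstacle I anticipate is the \emph{two-sided} nature of the induced-embedding greedy step: unlike in Section~\ref{BoundedSection}, where we only needed to preserve adjacencies, here each greedy choice must simultaneously be adjacent to prescribed vertices (handled by bi-density, which is a statement purely about color $q$ inside $S_i$) and non-adjacent to prescribed vertices (handled partly by the $f(V_i)\subset S_i$ block structure plus between-block sparsity of color $3-q$, and partly by the global pseudo-randomness of $G$). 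Making the error terms from pseudo-randomness compatible with the exponentially small block-density $\alpha$ --- that is, verifying that $2^{-140n}n^{-40n}$ is a small enough threshold on $\lambda/N$ to absorb all the $(\rho/2)^{O(n)}$ and $\alpha$ losses --- is the calculation that has to be done with some care, but it is routine given the parameter choices above.
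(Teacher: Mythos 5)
Your overall strategy (Lemma~\ref{generallemma} plus a greedy embedding into the dense structure, with pseudo-randomness supplying the non-adjacencies) is the same as the paper's, but there is a genuine gap at exactly the point you dismiss as routine: the cross-block \emph{blue} adjacencies. The between-block condition delivered by Lemma~\ref{generallemma} only says that each vertex of $S_{q_1}$ has at most $\beta|S_{q_2}|$ red edges into the \emph{original} set $S_{q_2}$. After $i$ steps the candidate set $V_{\ell,i}$ of a future vertex $\ell$ is smaller than $S_{q_2}$ by a factor of roughly $4^{-i}\rho^{D(i,\ell)}/n$ (every embedded vertex, neighbor or not, costs about a factor $1/2$ because the copy must be induced), so all $\beta|S_{q_2}|$ red edges of a prospective $v_{i+1}$ could be concentrated inside $V_{\ell,i}$ and destroy the blue neighborhood you need there. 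Per-vertex pseudo-randomness does not rescue this: for $A=\{w\}$ the density error is $\lambda/\sqrt{|B|}$, which need not be small when $N$ is much larger than $2^{O(n\log n)}$, so your assertion that ``every vertex has roughly $|B|/2$ non-neighbors in any large set $B$'' is false in general; one can only bound the \emph{number} of bad vertices by applying pseudo-randomness to pairs of large sets. This is why the paper's proof carries an invariant absent from your proposal: at every step, for cross-block pairs $j<\ell$, each vertex of $V_{j,i}$ has at most $(1+\epsilon_1)^i\beta'|V_{\ell,i}|$ red edges into $V_{\ell,i}$, and $v_{i+1}$ is chosen ``good'' so that, in addition to having near-half-size sets $\tilde N(w,V_{j,i})$, it does not skew the distribution of red edges into these sets (good-vertex property 3, preserved via Claim~\ref{claim3}). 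Without maintaining this relative red-sparsity between the shrinking candidate sets, the greedy step can get stuck no matter how small $\lambda$ is.

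There are also two parameter problems. With $\rho=1/60$ a constant, Lemma~\ref{generallemma} gives $\beta=2^{h+1}\rho=\Theta(n\rho)\gg1$, so the between-block red-sparsity is vacuous; your own requirement $\beta\le\tfrac1{2n}$ forces $\rho=O(n^{-2})$ (the paper takes $\rho=2^{-13}n^{-3}$, and needs $\beta'\approx n\beta$ small against $\epsilon_1=1/2n$). Moreover, because the induced constraints shrink \emph{every} candidate set by a constant factor at each of up to $n$ steps, on top of the initial split of $S_i$ into $d_i+1$ parts, the sets to which bi-density must be applied have size of order $4^{-n}\rho^{d}|S_i|/n$, not $\tfrac12(\rho/4)^{|M(h,k)|}|S_i|$ as you estimate; hence $f(\rho,d)$ must include an extra exponentially small factor (the paper uses $f(\rho,d)=2^{-5n}\rho^{d}$), and your choice $f(\rho,d)=(\rho/2)^{2d}$ is too large for small $d$ to make the bi-density applicable. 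Both issues are repairable, but together with the missing invariant they show the heart of the argument — preserving red-sparsity between candidate sets via pseudo-randomness — is not yet present in your proposal.
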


The set-up for the proof of this lemma is roughly similar to the one presented
in the previous section. We start with a collection of bi-dense sets, in say
blue, such that the density of red edges between each pair of sets is
small. The goal is to embed a blue induced copy of a given graph $H$ on
vertices $1,\ldots,n$. We embed vertices one at a time, always maintaining
large sets in which we may embed later vertices. Suppose that at
step $i$ of our embedding, after $v_1, v_2, \dots, v_i$ are chosen, we have
sets $V_{j,i}$ for $j>i$ corresponding to the possible choices for future
$v_j$. If the vertices $j,\ell>i$ are not adjacent, then, by the
pseudo-randomness of $G$, the density of nonedges between any two large sets is
roughly $1/2$, and it is therefore easy to guarantee that we can pick $v_j$ and
$v_{\ell}$ so that they are nonadjacent. On the other hand, if the vertices
$j,\ell>i$ are adjacent, then we need to guarantee that $v_j$ and $v_{\ell}$
will be joined by a blue edge. Thus, it would be helpful to ensure that the
density of blue edges between $V_{j,i}$ and $V_{\ell,i}$ is not too small. In
the bounded-degree case we maintain such a property by
exploiting the fact that the blue density between any two large sets is large.
Here, we do not have this luxury in the case that $V_{j,i}$ and $V_{\ell,i}$
are subsets of different bi-dense sets in the collection. It is instead
necessary to use the fact that the underlying graph $G$ is pseudo-random.

To see how this helps, suppose that we now wish to embed $v_{i+1}$. This will
affect the sets $V_{j,i}$ and $V_{\ell,i}$, resulting in
subsets $V_{j,i+1}$ and $V_{\ell,i+1}$. We would like these subsets to mirror
the density
properties between $V_{j,i}$ and $V_{\ell,i}$. The way we proceed is to show
that using pseudo-randomness we can choose $v_{i+1}$ such that the density of
red edges between the sets $V_{j,i+1}$ and $V_{\ell,i+1}$ remains small. Since
$G$ is pseudo-random, the total density between large sets is roughly
$1/2$ and therefore there will still be many blue edges between these two sets.

{\bf Proof of Lemma \ref{inducedembeddinglemma}:}
We split the proof into four steps.

{\bf Step 1:} We will first choose appropriate constants and prepare $G$ for
embedding monochromatic induced subgraphs.

Any $(1/2,\lambda)$-pseudo-random graph on at least two vertices must satisfy
$\lambda \geq 1/2$. Indeed, letting $A$ and $B$ be distinct vertex subsets each
of cardinality $1$, we have $$1/2=|d(A,B)-1/2| \leq
\frac{\lambda}{\sqrt{|A||B|}} =\lambda.$$ It follows that $N \geq
2^{140n}n^{40n}\lambda \geq 2^{138n}n^{40n}$.

We will start by picking some constants. Pick $\rho = 2^{-13}n^{-3}$, $h=\lceil
\log n \rceil \leq \log 2n$,
$\beta=2^{h+1}\rho \leq 8n\rho=2^{-10}n^{-2}$, $f(\rho,0)=1$ and
$f(\rho,d)=2^{-5n}\rho^d$ if $d>0$, so
\begin{eqnarray*}
\alpha & = & 2^{-2h} f(\rho,0)^{-1}f(\rho,2^{h}-1)^{-1}\prod_{i=0}^{h}
f(\rho,2^i-1)^2 = 2^{-2h} f(\rho,2^{h}-1)\prod_{i=1}^{h-1} f(\rho,2^i-1)^2 \\ &
= &
2^{-2h -5n} \rho^{2^h - 1} \prod_{i=1}^{h-1} 2^{-10n} \rho^{2(2^i - 1)}
 =  2^{-2h-(2h-1)5n}\rho^{3 \cdot 2^h-2h-3} \\ & \geq & (2n)^{-12n}\rho^{6n} =
2^{-90n}n^{-30n}.
\end{eqnarray*}
Lemma \ref{generallemma} implies that $(\alpha,\beta,\rho,f,2^{h}-1,2^h-1)$ is
universal. As $2^h \geq n$, it follows that $(\alpha,\beta,\rho,f,n-1,n-1)$ is
also universal. Let $\epsilon_1=\frac{1}{2n}$,
$\epsilon_2=\frac{\epsilon_1\rho}{32n} = 2^{-19}n^{-5}$,
$\epsilon_3=\epsilon_4=\frac{1}{8n}$, $\epsilon_5=\frac{1}{8n^2}$,
$\epsilon_6=\epsilon_2\epsilon_5 = 2^{-22}n^{-7}$ and
$\beta'=2n\beta \leq 2^{-9}n^{-1}$.

Since every red-blue edge of $G$ is $(\alpha,\beta,\rho, f,n-1,n-1)$-dense, we
may assume that there are disjoint vertex subsets $S_1,\ldots,S_t$ each of
cardinality at least $\alpha N$ and nonnegative integers $d_1,\ldots,d_t$ with
$d_1+\cdots+d_t=n-t$ such that
\begin{itemize}
\item for $1 \leq i \leq t$, $S_i$ is bi-$(f(\rho,d_i),\rho)$-dense in the blue
graph, and
\item for $1 \leq i < j \leq t$, each vertex in $S_i$ is in at most
$\beta|S_j|$ red edges with vertices in $S_j$.
\end{itemize}

We will show that we can find a monochromatic blue induced copy of each graph
$H$ on $n$ vertices. We may suppose the vertex set of $H$ is
$V(H)=[n]:=\{1,\ldots,n\}$. Partition $[n]=U_1 \cup \ldots \cup U_t$, with the
vertices in $U_i$ coming before the vertices in $U_j$ for $i<j$ and
$|U_i|=d_i+1$ for $1 \leq i \leq t$. For $j \in U_l$, let $D(i,j)$ denote the
number of neighbors $h$ of $j$ with
$h \leq i$ and $h \in U_l$. Arbitrarily partition $S_i$ into $d_i+1$
sets $\bigcup_{k \in U_i} V_k$ each of cardinality at least $\lfloor
\frac{|S_i|}{d_i+1}\rfloor \geq \lfloor
\frac{|S_i|}{n}\rfloor \geq \frac{|S_i|}{2n} \geq \frac{\alpha}{2n} N$, where
we use $d_i+1 \leq n$, $|S_i| \geq \alpha N$, and the lower bounds on $\alpha$
and $N$.

{\bf Step 2:} We now describe our strategy for constructing induced blue copies
of $H$. In broad outline, we
proceed by induction, embedding each successive vertex $i$ in the set $V_i$.
To achieve this, we have to maintain several conditions which allow us to embed
future vertices.

At the end of step $i$, we will have vertices $v_1,\ldots,v_i$ and, for $j >
i$, subsets
$V_{j,i} \subset V_j$ such that the following four conditions hold.
\begin{enumerate}
\item  for $j, \ell \leq i$, if $(j,{\ell})$ is an
edge of $H$, then $(v_j,v_{\ell})$ is a blue edge of $G$, otherwise
$v_j$ and $v_{\ell}$  are not adjacent in $G$;
\item for $ j \leq i < \ell$, if $(j,{\ell})$ is an
edge of $H$, then $v_j$ is adjacent to all vertices in $V_{\ell,i}$
by blue edges, otherwise there are no edges of $G$ from $v_j$ to
$V_{\ell,i}$;
\item for $j > i$, we have $|V_{j,i}| \geq 4^{-i}\rho^{D(i,j)}|V_j|$;
\item for $\ell>j>i$, if $j \in U_{q_1}$ and $\ell \in U_{q_2}$ with
$q_1<q_2$, then each vertex in $V_{j,i}$ is in at most $(1+\epsilon_1)^i
\beta'|V_{\ell,i}|$ red edges with vertices in $V_{\ell,i}$.
\end{enumerate}
Note that $V_{j,i}$ is a subset of vertices of $G$ in which we can still embed
vertex $j$ from $H$ after $i$ steps of our embedding procedure.
Clearly, at the end of the first $n$ steps of this process we obtain the
required copy of $H$. For $i=0$ and $j \in [n]$, define
$V_{j,0}=V_j$. Notice that the above four properties are satisfied
for $i=0$. Indeed, the first two properties are vacuously satisfied, the third
property follows from $V_{j,0}=V_j$, and the last property follows from the
simple inequality $\beta'|V_{\ell,0}|
= 2n\beta|V_{\ell}|\geq \beta|S_{q_2}|$. We now assume that the above four
properties are satisfied at the end
of step $i$ and show how to complete step $i+1$ by finding a vertex
$v_{i+1} \in V_{i+1,i}$ and, for $j>i+1$, subsets $V_{j,i+1} \subset V_{j,i}$
such that conditions 1-4 still hold.

Before we begin the next step of the proof, we need to introduce some notation.
For a vertex $w \in V_j$ and a
subset $S \subset V_{\ell}$ with $j \not =\ell$, let
\begin{itemize}
\item $N(w,S)$ denote the set of vertices $s \in S$ such that $(s,w)$ is an
edge of
$G$,
\item $R(w,S)$ denote the set of vertices $s \in S$ such that $(s,w)$ is a
red edge of $G$,
\item $B(w,S)$ denote the set of vertices $s \in S$ such that $(s,w)$ is a
blue edge of $G$,
\item $\tilde{N}(w,S)=N(w,S)$ if $(j, \ell)$ is an edge of $H$ and
$\tilde{N}(w,S)=S
\setminus N(w,S)$ otherwise,
\item $\tilde{B}(w,S)=B(w,S)$ if $(j, \ell)$ is an edge of $H$ and
$\tilde{B}(w,S):=S
\setminus N(w,S)$ otherwise.
\end{itemize}

Note, for all $S \subset V_{\ell}$ and $w \in V_j$, that
$\tilde{B}(w,S)=\tilde{N}(w,S) \setminus R(w,S)$.
Moreover, since the graph $G$ is pseudo-random with
edge density $1/2$, we expect that for every large subset $S \subset V_{\ell}$
and for most vertices $w \in V_j$ the size of $\tilde{N}(w,S)$ will be roughly
$|S|/2$.

{\bf Step 3:} We next show that if there is a vertex satisfying certain
conditions, then we can continue our embedding. In the last step we show that
there is such a ``good'' vertex.

Let $q$ be the index such that $i+1 \in U_q$. Call a vertex $w \in V_{i+1,i}$
{\it good}
if
\begin{enumerate}
\item for all $j >i+1$ such that $(j,i+1)$ is an edge of $H$ and $j \in U_q$,
$|B(w,V_{j,i})|\geq \rho|V_{j,i}|$,
\item for all $j >i+1$, $|\tilde{N}(w,V_{j,i})|\geq
\left(\frac{1}{2}-\frac{\epsilon_1}{20}\right)|V_{j,i}|$,
\item for all $\ell>j>i+1$ with $j \in U_{q_1}$, $\ell \in U_{q_2}$, and $q_1 <
q_2$,
there are at most $\epsilon_2|V_{j,i}|$ vertices $y \in V_{j,i}$ such that $y$
is in at least
$\beta'\left(\frac{1}{2}-\frac{\epsilon_1}{10}\right)|V_{\ell,i}|$ red edges
with vertices in $V_{\ell,i}$ and $y$ is in at least
$\left(\frac{1}{2}+\frac{\epsilon_1}{10}\right)|R(y,V_{\ell,i})|$ red edges
with vertices of
$\tilde{N}(w,V_{\ell,i})$.
\end{enumerate}
Note that, because the graph $G$ is pseudo-random with edge density $1/2$, we
expect a typical vertex in $V_{i+1,i}$ to be adjacent (and also nonadjacent) to
roughly $1/2$ of the vertices in $V_{j,i}$ and $V_{\ell,i}$. Moreover,
condition $3$ roughly says that for a typical vertex, the density of red edges
between its  neighborhoods in $V_{j,i}$ and $V_{\ell,i}$ is not much larger
than the overall density of red edges between these two sets.

We will now show that if there is a good vertex $w \in V_{i+1,i}$, then we may
continue the
embedding by taking $v_{i+1}=w$ and, for $j > i + 1$ with $j \in U_{q_1}$,
letting
$V_{j,i+1}$ be the subset of $\tilde B(w,V_{j,i})$ formed by deleting all
vertices $y$ for which there is $\ell>j$ with $\ell \not \in U_{q_1}$ such that
$y$ is in at least
$\beta'\left(\frac{1}{2}-\frac{\epsilon_1}{10}\right)|V_{\ell,i}|$ red edges
with vertices in $V_{\ell,i}$ and $y$ is in at least
$\left(\frac{1}{2}+\frac{\epsilon_1}{10}\right)|R(y,V_{\ell,i})|$ red edges
with vertices of $\tilde{N}(w,V_{\ell,i})$. Note that, by the third property of
good vertices,
\begin{equation}\label{eq1234}|V_{j,i+1}| \geq |\tilde
B(w,V_{j,i})|-n\epsilon_2|V_{j,i}|.\end{equation}
Let us verify each of the required properties of our embedding in turn.

To verify the first property, we need to show that if $j \leq i$ and $(j, i+1)$
is
an edge of $H$ then $(v_j, v_{i+1})$ is a blue edge and, if $(j,i+1)$ is not an
edge
of $H$, then $(v_j, v_{i+1})$ is not in $G$. But this follows by induction
since, when the first
$i$ vertices were embedded, we had that for all $j \leq i < l$, if $(j,l)$ was
an edge of $H$, then $v_j$
was adjacent to all edges of $V_{l,i}$ by blue edges. Otherwise, there were no
edges between $v_j$ and $V_{l,i}$.
Taking $l = i+1$, the necessary property follows.

For the second property, we would like to show that for $j \leq i+1 < l$, if
$(j,l)$ is an edge of $H$, then $v_j$ is
adjacent to all vertices in $V_{l,i+1}$ by blue edges and, otherwise, there are
no edges between $v_j$ and $V_{l,i+1}$.
Observe that, for all $l > i+1$, the set $V_{l,i+1}$ is a subset of the set
$V_{l,i}$. Therefore, by induction,
we only need to check the condition for $j = i+1$. But $V_{l,i+1}$ is a subset
of $\tilde{B}(v_{i+1}, V_{l,i})$, so this
follows by definition.

We now wish to prove that, for all $j > i+1$, $|V_{j,i+1}| \geq 4^{-(i+1)}
\rho^{D(i+1,j)} |V_j|$. Inequality (\ref{eq1234}) together with the first
property of good vertices
implies that if $j>i+1$, $(j,i+1)$ is an edge of $H$ and $j \in U_q$ (recall
that also $i+1 \in U_q$), then,
since $\epsilon_2 \leq \rho/(2n)$ and $D(i+1,j) = D(i,j) + 1$,
$$|V_{j,i+1}| \geq (\rho-n\epsilon_2)|V_{j,i}| \geq \frac{\rho}{2}|V_{j,i}|
\geq
\frac{\rho}{2}4^{-i}\rho^{D(i,j)}|V_j| \geq 4^{-(i+1)}\rho^{D(i+1,j)}|V_j|.$$
Inequality (\ref{eq1234}), the second property of good vertices and the
inductive assumption that $w$ has at most $(1+\epsilon_1)^i \beta' |V_{j,i}|$
red neighbors in $V_{j,i}$ if $j \not\in U_q$
together imply that for all other $j>i+1$, we have
\begin{eqnarray*} |V_{j,i+1}| & \geq &
| \tilde{B}(w,V_{j,i})|-n\epsilon_2|V_{j,i}| = |\tilde{N}(w,V_{j,i}) \setminus
R(w,V_{j,i})| -n\epsilon_2|V_{j,i}| \\ & \geq &
\left(\frac{1}{2}-\frac{\epsilon_1}{20}\right)|V_{j,i}|-(1+\epsilon_1)^i\beta'|V_{j,i}|
-n\epsilon_2|V_{j,i}|
\geq
\left(\frac{1}{2}-\frac{\epsilon_1}{20}-3\beta'-n\epsilon_2\right)|V_{j,i}|
\\ & \geq &  \left(\frac{1}{2}-\frac{\epsilon_1}{10}\right)|V_{j,i}|
\geq \frac{1}{4}4^{-i}\rho^{D(i,j)}|V_j| = 4^{-(i+1)}\rho^{D(i+1,j)}|V_j|.
\end{eqnarray*}
Here we use that $\epsilon_1 = 1/2n$,
$\beta' \leq 2^{-9} n^{-1}$, $\epsilon_2 \leq \epsilon_1/32 n$, and $D(i+1,j)=
D(i,j)$ (since $i+1$ and $j$ are either nonadjacent or belong to different
$U$s).
In either case, the required lower bound on the cardinality of $V_{j,i+1}$
holds. Note the intermediate inequality that $|V_{l,i+1}| \geq
\left(\frac{1}{2} -
\frac{\epsilon_1}{10}\right) |V_{l,i}|$ whenever $l \not\in U_q$.

If $i+1<j<\ell$ is such that $j \in U_{q_1}$ and $\ell \in U_{q_2}$ with
$q \leq q_1<q_2$, our deletion of vertices from $\tilde{B}(w,V_{j,i})$ implies
that each vertex in  $V_{j,i+1}$ is in less than
$$\beta'\left(\frac{1}{2}-\frac{\epsilon_1}{10}\right)|V_{\ell,i}| \leq \beta'
| V_{\ell,i+1}|$$ red edges with vertices in $V_{\ell,i}$
or each vertex in  $V_{j,i+1}$ is in less than
\begin{eqnarray*}
\left(\frac{1}{2}+\frac{\epsilon_1}{10}\right)|R(y,V_{\ell,i})| & \leq &
\left(\frac{1}{2}+\frac{\epsilon_1}{10}\right)(1+\epsilon_1)^i\beta'|V_{\ell,i}|
\leq
\left(\frac{1}{2}+\frac{\epsilon_1}{10}\right)(1+\epsilon_1)^i\beta'|V_{\ell,i+1}|/\left(\frac{1}{2}-\frac{\epsilon_1}{10}\right)
\\ & \leq &
(1+\epsilon_1)^{i+1}\beta'|V_{\ell,i+1}|
\end{eqnarray*}
red edges with vertices of $\tilde{N}(w,V_{\ell,i})$. In either case, we see
that the last desired condition of the embedding is satisfied.

{\bf Step 4:} We have shown that if there is a good vertex, then we can
continue the embedding. In this step we show that there is a good vertex in
$V_{i+1,i}$, which completes the proof.

The next three claims imply that the fraction of vertices in $V_{i+1,i}$ that
are
good is at least $1-n\epsilon_3-n\epsilon_4-n^2\epsilon_5>1/2$, i.e., more than
half of the vertices of $V_{i+1,i}$ are good. Indeed, Claim \ref{claim1} shows
that the first property of good vertices is satisfied for all but at most
$n\epsilon_3|V_{i+1,i}|$
vertices in $V_{i+1,i}$. Claim \ref{claim2} shows that the second property of
good
vertices is satisfied for all but at most $n\epsilon_4|V_{i+1,i}|$ vertices in
$V_{i+1,i}$. Claim 3 shows that the third property of good vertices is
satisfied for all but at most $n^2\epsilon_5|V_{i+1,i}|$ of the vertices in
$V_{i+1,i}$. These three claims therefore complete the proof. \qed

\begin{claim}\label{claim1}
For $j >i+1$ such that $(j,i+1)$ is an edge of $H$ and $j \in U_q$, let $Q_j$
denote the set of vertices $w \in V_{i+1,i}$ such that
$|B(w,V_{j,i})|< \rho|V_{j,i}|$. Then $|Q_j| < \epsilon_3|V_{i+1,i}|$.
\end{claim}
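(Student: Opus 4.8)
The plan is to prove Claim~\ref{claim1} by a counting argument using the bi-density of the blue graph inside $S_q$. Since $i+1 \in U_q$ and $j \in U_q$, both $V_{i+1,i}$ and $V_{j,i}$ are subsets of $S_q$. The key point is that $S_q$ is bi-$(f(\rho,d_q),\rho)$-dense in the blue graph, and the sets we are working with are large enough to invoke this. Specifically, $|V_{i+1,i}| \geq 4^{-i}\rho^{D(i,i+1)}|V_{i+1}| \geq 4^{-n}\rho^{n} \cdot \frac{\alpha}{2n}N$, and similarly for $V_{j,i}$; I would check, using the explicit choices $\rho = 2^{-13}n^{-3}$, $\alpha \geq 2^{-90n}n^{-30n}$, $h = \lceil \log n \rceil$, and $d_q \leq 2^h - 1 < 2n$, that both $|V_{i+1,i}|$ and $|V_{j,i}|$ are at least $f(\rho,d_q)|S_q|$. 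This is the hypothesis needed to apply bi-density to the pair $(V_{i+1,i}, V_{j,i})$ (after, if necessary, passing to subsets to make them disjoint — but $V_{i+1,i} \subseteq V_{i+1}$ and $V_{j,i} \subseteq V_j$ with $V_{i+1}, V_j$ distinct blocks in the partition of $S_q$, hence already disjoint).

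Next I would argue by contradiction. Suppose $|Q_j| \geq \epsilon_3 |V_{i+1,i}|$. Every vertex $w \in Q_j$ has $|B(w,V_{j,i})| < \rho |V_{j,i}|$, so the number of blue edges between $Q_j$ and $V_{j,i}$ is less than $\rho |Q_j| |V_{j,i}|$, i.e. the blue edge density $d(Q_j, V_{j,i})$ (in the blue graph) is less than $\rho$. For this to contradict bi-$(f(\rho,d_q),\rho)$-density of $S_q$, I need both $|Q_j| \geq f(\rho,d_q)|S_q|$ and $|V_{j,i}| \geq f(\rho,d_q)|S_q|$. The latter was established above; for the former, $|Q_j| \geq \epsilon_3|V_{i+1,i}| = \frac{1}{8n}|V_{i+1,i}|$, and since $|V_{i+1,i}|$ comfortably exceeds $f(\rho,d_q)|S_q|$ by a factor much larger than $8n$ (the slack in the cardinality bounds is exponential in $n$), this holds. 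Hence $d(Q_j, V_{j,i}) \geq \rho$, a contradiction, and therefore $|Q_j| < \epsilon_3 |V_{i+1,i}|$.

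The main obstacle — really the only nontrivial part — is verifying the chain of numerical inequalities showing that all the relevant sets ($V_{i+1,i}$, $V_{j,i}$, and $Q_j$) have cardinality at least $f(\rho,d_q)|S_q| = 2^{-5n}\rho^{d_q}|S_q|$ (or, for the $d_q=0$ case, that $f(\rho,0)=1 > 1/2$ makes bi-density vacuous and one must instead argue directly that a vertex with no blue neighbors in $V_{j,i}$ would force a red-complete bipartite pair, contradicting the red-sparseness between blocks of $S_q$ — but this case is easily handled since $D(i,j)$-type bounds still give large sets). Tracking the worst case $D(i,i+1) \leq i < n$ and $d_q < 2n$, the bounds $|V_{j,i}| \geq 4^{-i}\rho^{D(i,j)}|V_j| \geq 4^{-n}\rho^{n}\cdot\frac{\alpha}{2n}N$ versus $f(\rho,d_q)|S_q| \leq 2^{-5n}\rho^{2n}N$ (using $|S_q| \leq N$) must be compared; since $\alpha \geq 2^{-90n}n^{-30n}$ and $\rho = 2^{-13}n^{-3}$, one gets $4^{-n}\rho^n \alpha/(2n) \geq 2^{-90n-2n-1}n^{-30n-1}\rho^n$, which dominates $2^{-5n}\rho^{2n}$ because the extra $\rho^n$ on the right is far smaller than the polynomial-in-$n$ loss on the left — this is exactly the kind of routine but essential bookkeeping the authors defer, and it is where care is needed to ensure nothing is off by more than the generous margins allow.
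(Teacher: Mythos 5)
Your overall skeleton coincides with the paper's: argue by contradiction, observe that $Q_j \subset V_{i+1}$ and $V_{j,i} \subset V_j$ are disjoint subsets of $S_q$, verify that both have cardinality at least $f(\rho,d_q)|S_q|$, and invoke the bi-$(f(\rho,d_q),\rho)$-density of $S_q$ in blue to force blue density at least $\rho$ between $Q_j$ and $V_{j,i}$, contradicting the definition of $Q_j$. However, the step you yourself single out as the only nontrivial one -- the size verification -- is done incorrectly, and with your crude bounds it cannot be repaired. Your ``worst case'' is backwards: since $\rho<1$, $f(\rho,d_q)=2^{-5n}\rho^{d_q}$ is \emph{largest} when $d_q$ is small (and here $d_q$ can be as small as $1$), so the asserted bound $f(\rho,d_q)|S_q| \leq 2^{-5n}\rho^{2n}N$ uses the inequality $\rho^{d_q}\leq\rho^{2n}$ in the wrong direction. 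Moreover the specific comparison you state is numerically false: $2^{-92n-1}n^{-30n-1}\rho^{n}\geq 2^{-5n}\rho^{2n}$ is equivalent to $2^{-92n-1}n^{-30n-1}\geq 2^{-18n}n^{-3n}$, which fails badly; the factor $\rho^{n}=2^{-13n}n^{-3n}$ is much \emph{larger} than the loss $2^{-92n}n^{-30n}$ you hope will absorb it. More generally, bounding $D(i,j)$ by $n$ and routing the estimate through $|V_j|\geq \alpha N/(2n)$ together with $|S_q|\leq N$ loses a factor on the order of $\rho^{\,n-d_q}\alpha$, which the fixed slack $2^{-5n}$ in $f$ cannot compensate when $d_q$ is small.

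The missing ingredient is structural rather than bookkeeping: since $j$ and $i+1$ both lie in $U_q$ and $|U_q|=d_q+1$, every vertex of $U_q$ has at most $d_q$ neighbors inside $U_q$, hence $D(i,j)\leq d_q$ and $D(i,i+1)\leq d_q$ (and in particular $d_q\geq 1$, so the $d_q=0$ case you try to treat separately never arises). With this, the factor $\rho^{D(i,\cdot)}$ is at least $\rho^{d_q}$ and cancels against the $\rho^{d_q}$ inside $f(\rho,d_q)$, and one keeps $|S_q|$ on both sides by using $|V_j|\geq |S_q|/(2n)$ and $|V_{i+1}|\geq |S_q|/(2n)$ instead of detouring through $\alpha N$ and $|S_q|\leq N$. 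One then gets $|V_{j,i}|\geq 4^{1-n}\rho^{d_q}|S_q|/(2n)\geq 2^{-5n}\rho^{d_q}|S_q|=f(\rho,d_q)|S_q|$ and $|Q_j|\geq \epsilon_3\, 4^{1-n}\rho^{d_q}|S_q|/(2n)\geq f(\rho,d_q)|S_q|$, using only $2^{3n}\geq 8n^2$; the $2^{-5n}$ built into $f$ is there precisely to absorb the $4^{-n}$ and polynomial-in-$n$ losses, not an extra power $\rho^{\,n-d_q}$. With that observation your argument becomes the paper's proof; without it the claimed chain of inequalities does not hold.
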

\begin{proof}
Suppose, for contradiction, that $|Q_j| \geq \epsilon_3 |V_{i+1,i}|$. As $j,i+1
\in U_q$ and $|U_q|=d_q+1$, we have $d_q \geq 1$ and
$f(\rho,d_q)=2^{-5n}\rho^{d_q}$. Since $2^{3n} \geq 8n^2$, $|V_{i+1, i}|
\geq 4^{-i} \rho^{D(i, i+1)} |V_{i+1}|$ and $|V_{i+1}| \geq |S_q|/2n$, we have
$$|Q_j| \geq
\epsilon_3 4^{-i}\rho^{D(i,i+1)}|V_{i+1}| \geq \epsilon_3
4^{1-n}\rho^{d_q}|V_{i+1}| \geq
\frac{\epsilon_3}{n}4^{-n}\rho^{d_q}|S_q| \geq f(\rho,d_q)|S_q|.$$
We also have $$|V_{j,i}| \geq 4^{-i}\rho^{D(i,j)}|V_j| \geq
4^{1-n}\rho^{d_q}|V_j| \geq n^{-1}4^{-n}\rho^{d_q}|S_q| \geq
f(\rho,d_q)|S_q|.$$
Since $S_q$ is bi-$(f(\rho,d_q),\rho)$-dense in blue, the blue edge
density between $Q_j$ and $V_{j,i}$ is at least $\rho$, contradicting the
definition of $Q_j$.
\end{proof}

\begin{claim}\label{claim2}
For $j >i+1$, let $P_j$ denote the set of vertices $w \in V_{i+1,i}$ such that
$$|\tilde{N}(w,V_{j,i})|<
\left(\frac{1}{2}-\frac{\epsilon_1}{20}\right)|V_{j,i}|.$$ Then $|P_j| <
\epsilon_4|V_{i+1,i}|$.
\end{claim}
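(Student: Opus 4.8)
\emph{Proof proposal for Claim \ref{claim2}.}

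The plan is to obtain this from a single application of the pseudo-random density estimate for $G$. First note that $P_j\subseteq V_{i+1}$ and $V_{j,i}\subseteq V_j$ are disjoint (since $j\neq i+1$), so the estimate $|d(P_j,V_{j,i})-1/2|\le \lambda/\sqrt{|P_j||V_{j,i}|}$ is available. Assume for contradiction that $|P_j|\ge \epsilon_4|V_{i+1,i}|$, and split into two cases according to whether $(j,i+1)$ is an edge of $H$. If it is an edge, then $\tilde N(w,V_{j,i})=N(w,V_{j,i})$, so every $w\in P_j$ has $|N(w,V_{j,i})|<\left(\tfrac12-\tfrac{\epsilon_1}{20}\right)|V_{j,i}|$; summing over $w\in P_j$ gives $d(P_j,V_{j,i})<\tfrac12-\tfrac{\epsilon_1}{20}$. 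If $(j,i+1)$ is a nonedge, then $\tilde N(w,V_{j,i})=V_{j,i}\setminus N(w,V_{j,i})$, so every $w\in P_j$ has $|N(w,V_{j,i})|>\left(\tfrac12+\tfrac{\epsilon_1}{20}\right)|V_{j,i}|$ and hence $d(P_j,V_{j,i})>\tfrac12+\tfrac{\epsilon_1}{20}$. In either case $|d(P_j,V_{j,i})-1/2|>\epsilon_1/20$, so pseudo-randomness forces $\sqrt{|P_j||V_{j,i}|}<20\lambda/\epsilon_1$.

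It then remains to contradict this by bounding the two set sizes from below. By condition 3 of the embedding invariants (stated in Step 2), together with $i\le n$, $\rho<1$, and the bound $|V_k|\ge \frac{\alpha}{2n}N$ established in Step 1, we have $|V_{j,i}|\ge 4^{-i}\rho^{D(i,j)}|V_j|\ge 4^{-n}\rho^{n}\frac{\alpha}{2n}N$, and likewise $|P_j|\ge \epsilon_4|V_{i+1,i}|\ge \epsilon_4\,4^{-i}\rho^{D(i,i+1)}|V_{i+1}|\ge \epsilon_4\,4^{-n}\rho^{n}\frac{\alpha}{2n}N$. Hence $\sqrt{|P_j||V_{j,i}|}\ge \sqrt{\epsilon_4}\,4^{-n}\rho^{n}\frac{\alpha}{2n}N$. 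Substituting $\epsilon_1=\frac{1}{2n}$, $\epsilon_4=\frac{1}{8n}$, $\rho=2^{-13}n^{-3}$, $\alpha\ge 2^{-90n}n^{-30n}$, and the hypothesis $\lambda\le 2^{-140n}n^{-40n}N$, one checks $20\lambda/\epsilon_1<\sqrt{\epsilon_4}\,4^{-n}\rho^{n}\frac{\alpha}{2n}N$, contradicting the previous paragraph; therefore $|P_j|<\epsilon_4|V_{i+1,i}|$.

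The conceptual content is entirely the one-line density computation plus the pseudo-random estimate; the only place that needs care is the final numerical inequality among the constants, but this is routine because the constants in Step 1 were chosen with a large slack (the exponents $2^{-140n}n^{-40n}$ in the hypothesis on $\lambda$ comfortably dominate the $\epsilon_1,\epsilon_4,\rho,\alpha$ factors produced here). I do not expect any genuine obstacle.
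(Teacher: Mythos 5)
Your proposal is correct and is essentially the paper's own argument: both bound the edge density between $P_j$ and $V_{j,i}$ away from $1/2$ by $\epsilon_1/20$ (splitting on whether $(i+1,j)\in E(H)$), apply the $(1/2,\lambda)$-pseudo-randomness to get $\sqrt{|P_j||V_{j,i}|}<20\lambda/\epsilon_1$, and then use the invariant $|V_{k,i}|\ge 4^{-i}\rho^{D(i,k)}|V_k|$ together with $|V_k|\ge\frac{\alpha}{2n}N$ and the hypothesis on $\lambda$ to finish numerically. The only cosmetic difference is that you phrase it as a contradiction with $|P_j|\ge\epsilon_4|V_{i+1,i}|$, whereas the paper bounds $|P_j|<400\lambda^2/(\epsilon_1^2|V_{j,i}|)$ directly and compares with $\epsilon_4|V_{i+1,i}|$; your final inequality does check out with ample slack.
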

\begin{proof}
The definition of $P_j$ implies that the density of edges between $P_j$ and
$V_{j,i}$ is either less than $\frac{1}{2} - \frac{\epsilon_1}{20}$ or more
than
$\frac{1}{2}+\frac{\epsilon_1}{20}$ (depending on whether or not $(i+1,j)$ is
an edge of $H$). Therefore,
since $G$ is $(1/2,\lambda)$-pseudo-random, we have
$\frac{\epsilon_1}{20}<\frac{\lambda}{\sqrt{|P_j||V_{j,i}|}}$.  Note that, for
$j > i$, since $\rho = 2^{-13} n^{-3}$ and $\alpha \geq 2^{-90n} n^{-30 n}$,
\begin{equation}\label{eqn2} |V_{j,i}| \geq 4^{-i} \rho^{D(i,j)} |V_j| \geq
4^{-n} \rho^{n} |V_j| \geq 2^{-15n} n^{-3n} \frac{\alpha}{2n}
N \geq 2^{-106n} n^{-34n} N.\end{equation}
Hence, since we also have $\epsilon_1 = 1/2n$, $\epsilon_4 = 1/8n$ and $\lambda
\leq 2^{-140n} n^{-40n} N$,
\begin{eqnarray}\label{eqn3}
| P_j| & < & \frac{400\lambda^2}{\epsilon_1^2 |V_{j,i}|} < \frac{2^{9}
| 2^{-280n}
n^{-80n} N^2}{(2n)^{-2} 2^{-106n} n^{-34n} N} = 2^{11} n^2 2^{-174 n}
n^{-46n} N\\
& \leq & 2^{-163n} n^{-44n} N \leq (8n)^{-1} 2^{-106n} n^{-34n}N \leq
\epsilon_4|V_{i+1,i}|.\nonumber
\end{eqnarray}
\end{proof}

\begin{claim}\label{claim3}
Fix a pair $j$ and $\ell$ with $i+1<j<\ell$, $j \in U_{q_1}$, $\ell \in
U_{q_2}$, and $q_1 < q_2$. Let $X=V_{i+1,i}$, $Y=V_{j,i}$, and $Z=V_{\ell,i}$.
Define the bipartite graph $F=F_{j,\ell}$ with parts $X$ and $Y$ where $(x,y)
\in X \times Y$ is an edge if
$$|R(y,Z)| \geq \beta' \left(\frac{1}{2}-\frac{\epsilon_1}{10}\right)|Z|$$ and
$$|R(y,Z) \cap \tilde{N}(x,Z)| >
\left(\frac{1}{2}+\frac{\epsilon_1}{10}\right)|R(y,Z)|.$$ Let $T_{j,\ell}$
denote the set of vertices in
$X$ with degree at least $\epsilon_2|Y|$ in $F$. Then $|T_{j,\ell}| \leq
\epsilon_5|X|$.
\end{claim}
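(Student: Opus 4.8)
The plan is to bound $|T_{j,\ell}|$ by a double-counting argument over edges of the auxiliary bipartite graph $F$, using pseudo-randomness of $G$ twice. First I would isolate the set $Y' \subseteq Y$ of vertices $y$ whose red-degree into $Z$ is at least $\beta'(\tfrac12 - \tfrac{\epsilon_1}{10})|Z|$; only these can be endpoints of $F$-edges. For each fixed $y \in Y'$, write $R = R(y,Z)$, so $|R| \geq \beta'(\tfrac12 - \tfrac{\epsilon_1}{10})|Z|$, which is large relative to $N$ by the lower bound (\ref{eqn2}) on $|Z| = |V_{\ell,i}|$ and the fact that $\beta' \geq 2n\beta \geq 2n\rho$ times something bounded below — in any case $|R|$ is at least of order $2^{-O(n)}n^{-O(n)}N$, comfortably larger than $\lambda^{1/2}$ scales. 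The key observation is that $x$ being $F$-adjacent to $y$ means $|R \cap \tilde N(x,Z)| > (\tfrac12 + \tfrac{\epsilon_1}{10})|R|$, i.e. $x$ is ``too correlated'' with the set $R$.

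The main step is then: for each fixed $y \in Y'$, the number of $x \in X$ with $|R \cap \tilde N(x,Z)| > (\tfrac12 + \tfrac{\epsilon_1}{10})|R|$ is small. Here $\tilde N(x,Z)$ is either $N(x,Z)$ or $Z \setminus N(x,Z)$ depending on whether $(i+1,\ell)$ is an edge of $H$; in either case, $|R \cap \tilde N(x,Z)|$ deviating from $\tfrac12|R|$ by at least $\tfrac{\epsilon_1}{10}|R|$ forces the edge density $d(W, R)$ between $W := \{$such $x\}$ and $R$ to be bounded away from $\tfrac12$ by $\tfrac{\epsilon_1}{10}$ (after accounting for the non-edge case by a complementation). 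Pseudo-randomness gives $\tfrac{\epsilon_1}{10} \leq \lambda / \sqrt{|W||R|}$, hence $|W| \leq 100\lambda^2 / (\epsilon_1^2 |R|)$. Plugging in $\epsilon_1 = 1/2n$, the lower bound on $|R|$ derived from (\ref{eqn2}) and $\beta'$, and $\lambda \leq 2^{-140n}n^{-40n}N$ — exactly as in the chain of inequalities (\ref{eqn3}) in the proof of Claim \ref{claim2}, but now with an extra factor of $1/\beta' \leq 2^9 n$ from the smaller size of $R$ versus $Z$ — yields $|W| \leq \epsilon_2 \epsilon_5 |X| = \epsilon_6 |X|$ for each $y$, where I would need to track that the slack in the exponents (which was of order $2^{-20n}$ in (\ref{eqn3})) easily absorbs the polynomial factors $1/\epsilon_2 = 2^{19}n^5$ and $1/\beta'$.

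To finish, I would double-count edges of $F$. On one hand, every $x \in T_{j,\ell}$ has at least $\epsilon_2 |Y|$ edges in $F$, so $e(F) \geq \epsilon_2 |Y| \, |T_{j,\ell}|$. On the other hand, $e(F) = \sum_{y \in Y'} \deg_F(y) \leq |Y'| \cdot \max_y |W_y| \leq |Y| \cdot \epsilon_6 |X|$ by the previous paragraph. Combining, $|T_{j,\ell}| \leq \epsilon_6 |X| / \epsilon_2 = \epsilon_5 |X|$, using $\epsilon_6 = \epsilon_2 \epsilon_5$, which is exactly the claimed bound. The main obstacle is the second step: verifying that the relevant set $R = R(y,Z)$ is genuinely large enough — i.e.\ of size $\geq$ (polynomial)$\cdot \lambda$ — so that pseudo-randomness can be applied at all, and then checking that the accumulated polynomial-in-$n$ losses (from $1/\beta'$, $1/\epsilon_2$, the factor $100$, the $n^2$ from $|X|$ versus $|V_{i+1}|$, etc.) are swallowed by the exponential gap built into the hypothesis $\lambda \leq 2^{-140n}n^{-40n}N$. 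This is a routine but careful bookkeeping computation entirely parallel to (\ref{eqn2})–(\ref{eqn3}).
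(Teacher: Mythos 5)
Your proposal is correct and follows essentially the same route as the paper: for each $y$ with large $|R(y,Z)|$, pseudo-randomness applied to the pair $(X_y, R(y,Z))$ gives $|X_y|\leq 100\lambda^2/(\epsilon_1^2|R(y,Z)|)\leq \epsilon_6|X|$, and then the same double count of edges of $F$ (density at most $\epsilon_6$, so at most $\epsilon_6|X||Y|/(\epsilon_2|Y|)=\epsilon_5|X|$ vertices of degree at least $\epsilon_2|Y|$) finishes the argument. The bookkeeping you defer is exactly the chain of inequalities the paper carries out via (\ref{eqn2}) and (\ref{eqn3}), so nothing is missing.
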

\begin{proof}
For $y \in Y$, let $X_y \subset X$ denote the neighbors of $y$ in graph $F$.
Note that, for
every $x \in X_y$, the fact that $|R(y,Z) \cap \tilde{N}(x,Z)| >
\left(\frac{1}{2}+\frac{\epsilon_1}{10}\right)|R(y,Z)|$ implies that, in either
the graph $G$ or its complement,
$x$ has at least $\left(\frac{1}{2}+\frac{\epsilon_1}{10}\right)|R(y,Z)|$
neighbors in $R(y,Z)$ (this is again because $\tilde{N}(x,Z)$ is either the
neighborhood of $x$ or its complement depending on whether or not $(i+1,\ell)$
is an edge of $H$).
Therefore, since $G$ is $(1/2,\lambda)$-pseudo-random, $$\frac{\epsilon_1}{10}
\leq
\frac{\lambda}{\sqrt{|X_y||R(y,Z)|}},$$
Note that, by the first condition on $F$, if $y$ has any
neighbors in $X$, $|R(y,Z)| \geq \beta' |Z|/4$. Therefore,
$$|X_y| \leq \frac{100\lambda^2}{\epsilon_1^2|R(y,Z)|} \leq
\frac{400\lambda^2}{\epsilon_1^2\beta'|Z|} \leq \epsilon_6|X|.$$
This last inequality follows as in the previous claim. Indeed, since
$\beta' = 2^{-9} n^{-1}$, $\epsilon_6 = 2^{-22} n^{-7}$, $Z=V_{\ell,i}$, and
$X=V_{i+1,i}$, using inequalities (\ref{eqn2},\ref{eqn3}), we have
$$\frac{400 \lambda^2}{\epsilon_1^2 \beta' |Z|} \leq \beta'^{-1} 2^{-163n}
n^{-44n} N \leq 2^{-154 n} n^{-43 n} N \leq 2^{-22} n^{-7} 2^{-106n} n^{-34n} N
\leq \epsilon_6 |X|.$$
Therefore, the edge density of $F$ between $X$ and $Y$ is at most $\epsilon_6$
and there are at most
$\frac{\epsilon_6|X||Y|}{\epsilon_2|Y|}=\epsilon_5|X|$ vertices in $X$ with
degree at least $\epsilon_2|Y|$ in $F$.
\end{proof}

\section{Concluding remarks}

Another interesting concept of sparseness, introduced by Chen and Schelp
\cite{CS93}, is that of arrangeability. A graph $H$ is said to be {\it
$p$-arrangeable} if there is an ordering of the vertices of $H$ such that, for
any vertex $v_i$, the set of neighbors to the right of $v_i$ in the ordering
have at most $p$ neighbors to the left of $v_i$ (including $v_i$ itself).
Extending the result of Chv\'atal, R\"{o}dl, Szemer\'edi and Trotter
\cite{CRST83}, Chen and Schelp showed that for every $p$ there is a constant
$c(p)$ such that, for any $p$-arrangeable graph $H$ with $n$ vertices, $r(H)
\leq c(p) n$. This result has several consequences. Planar graphs, for example,
may be shown to be $10$-arrangeable \cite{KT94}, so their Ramsey numbers grow
linearly. The best bound that is known for $c(p)$, again due to Graham,
R\"{o}dl and Ruci\'nski \cite{GRR00}, is $c(p) \leq 2^{c p (\log p)^2}$.
Unfortunately, it
is unclear whether the bounds that we have given for bounded-degree graphs can
be extended to the class of arrangeable graphs. It would be interesting to
prove such a bound.

An even more problematic notion is that of degeneracy. A graph $H$ is said to
be {\it $d$-degenerate} if there is an ordering of the vertices of $H$ such
that any vertex $v_i$ has at most $d$ neighbors that precede it in the
ordering. Equivalently, every subgraph of $H$ has a vertex of degree at most
$d$. A conjecture of Burr and Erd\H{o}s \cite{BE75} states
that for every $d$ there should be a constant $c(d)$ such that, for any
$d$-degenerate graph $H$ with $n$ vertices, $r(H) \leq c(d) n$. This
conjecture,
which is still open, is a substantial generalization of the results on Ramsey
numbers of bounded-degree graphs. The best result that is known, due to Fox and
Sudakov \cite{FS209}, is $r(H) \leq 2^{c(d) \sqrt{\log n}} n$.

An old related problem is to bound the Ramsey number of graphs with $m$ edges.
Erd\H{o}s and Graham \cite{ErGr} conjectured that among all graphs with
$m= \binom{n}{2}$ edges and no isolated vertices, the complete graph on
$n$ vertices has the largest Ramsey number. Motivated by the lack of
progress on this conjecture, Erd\H{o}s \cite{Er1} asked whether one could
at least show that the Ramsey number of any graph with $m$ edges is not
much larger than that of the complete graph with the same size. Since the
number of vertices in a complete graph with $m$ edges is on the order of
$\sqrt{m}$, Erd\H{o}s conjectured that $r(H) \leq 2^{c\sqrt{m}}$ holds for
every graph $H$ with $m$ edges and no isolated vertices. Until recently the
best known bound
for this problem was $2^{c\sqrt{m}\log m}$ (see \cite{AlKrSu}).
To attack Erd\H{o}s' conjecture one can try to use the result on Ramsey numbers
of bounded-degree graphs.
Indeed, given a graph $H$ with $m$ edges, one can first embed the $2\sqrt{m}$
vertices of largest
degree in $H$ using the standard pigeonhole argument of Erd\H{o}s and
Szekeres \cite{ErSz}. The remaining vertices of $H$ span a graph with maximum
degree $\sqrt{m}$. Hence, one may apply the arguments used to prove the upper
bound for Ramsey numbers of bounded-degree graphs to embed the rest of $H$.
However, this approach will likely require an upper bound of
$2^{c\Delta}n$ on the Ramsey number for graphs on $n$
vertices of maximum degree $\Delta$, which we do not have yet.
Recently, the third author \cite{Su} was able to circumvent this difficulty and
prove Erd\H{o}s' conjecture.

Finally, we would like to stress that the proofs given in this paper are highly
specific to the 2-color case. The best results that are known in the $q$-color
case are obtained by an entirely different method \cite{FS07} and are
considerably worse. For example, the $q$-color Ramsey number $r_q(H)$ of a
graph on $n$ vertices with maximum degree $\Delta$ is only known to satisfy the
inequality $r_q(H) \leq 2^{c_q \Delta^2} n$. It would be of considerable
interest to improve this latter bound to $r_q(H) \leq 2^{c_q \Delta^{1+o(1)}}
n$.

\end{document}